\newtheorem{theorem}{Theorem}
\newtheorem{remark}{Remark}
\newtheorem{problem}{Problem}
\newtheorem{assumption}{Assumption}
\newtheorem*{lemma-non}{Lemma}
\journal{Engineering Applications of Artificial Intelligence}
\begin{document}

\begin{frontmatter}

\title{Provably-Stable Neural Network-Based Control of Nonlinear Systems\tnoteref{label1}}

\tnotetext[label1]{This research has been supported by WSU Voiland College of Engineering and Architecture through a start-up package to M. Hosseinzadeh.}

\author{Anran Li}
\ead{anran.li@wsu.edu}

\author{John P. Swensen}
\ead{john.swensen@wsu.edu}

\author{Mehdi Hosseinzadeh\corref{cor}}
\ead{mehdi.hosseinzadeh@wsu.edu}

\cortext[cor]{Corresponding author.}

\address{School of Mechanical and Materials Engineering, Washington State University, Pullman, WA 99164, USA}

\begin{abstract}
In recent years, Neural Networks (NNs) have been employed to control nonlinear systems due to their potential capability in dealing with situations that might be difficult for conventional nonlinear control schemes. However, to the best of our knowledge, the current literature on NN-based control lacks theoretical guarantees for stability and tracking performance. This precludes the application of NN-based control schemes to systems where stringent stability and performance guarantees are required. To address this gap, this paper proposes a systematic and comprehensive methodology to design provably-stable NN-based control schemes for affine nonlinear systems. Rigorous analysis is provided to show that the proposed approach guarantees stability of the closed-loop system with the NN in the loop. Also, it is shown that the resulting NN-based control scheme ensures that system states asymptotically converge to a neighborhood around the desired equilibrium point, with a tunable proximity threshold. The proposed methodology is validated and evaluated via simulation studies on an inverted pendulum and experimental studies on a Parrot Bebop 2 drone. 
\end{abstract}

\begin{keyword}
Neural network-based control \sep Stability guarantees \sep Performance analysis \sep Predictive control \sep Nonlinear systems
\end{keyword}

\end{frontmatter}

\section{Introduction}\label{sec:Intro}

Nonlinear systems appear in today's real-world control problems. Historically, nonlinear systems have been addressed through various techniques of linearization and application of well-established linear control system theory. Inherently, almost all physical systems are nonlinear and the foundational linear system theory incentivized methodologies and regimes where nonlinear systems could be treated as linear systems. For instance, one can use the feedback linearization technique \cite{KhalilBook,IsidoriBook} to convert the nonlinear system into a linear system, and then use linear control techniques (see e.g., \cite{OgataBook,ChenBook}) to address the control problem. Another approach is to successively linearize the nonlinear system and use the Linear Quadratic Regulator (LQR) method to control the nonlinear system; this approach is called iterative LQR and has been widely studied in prior work, e.g.,
\cite{Li2004,Todorov2005,Rodrigues2011,Prasad2014,Boby2014,Mathiyalagan2019,Chen2017}.

A different approach to control nonlinear systems is to deal with them directly by using nonlinear control techniques; see \cite{KhalilBook,SastryBook,Rawlings1994,VidyasagarBook} for details of some of existing techniques. One of the challenges in direct controlling of nonlinear systems is guaranteeing stability at all operating points \cite{Hunt2011NeuralNE,Sinha2021AdaptiveRM}.
One possible approach to address this issue, {{{\color{blue}}}which has gained growing attention  \cite{elhaki2020robust,cheng2020neural,zhou2021control,jiang2019design,punish2018},} is to use Neural Networks (NN) in the control loop and in combination with Lyapunov theory.


In this context, \cite{dai2021lyapunov} proposes a NN-based controller which provides a Lyapunov function for stability purposes; however, it does not address control objectives and performance metrics. {{{\color{blue}}}Augmented neural Lyapunov control has been introduced in \cite{grande2023augmentedNLC} to address the control problem; however, \cite{grande2023augmentedNLC} does not provide theoretical analysis for stability and convergence.} A combination of LQR and an online NN has been proposed in \cite{nghi2021lqr} to stabilize an inverted pendulum in upright posture, without providing guarantees on stability. {{{\color{blue}}}In \cite{alsaade2023HHinf}, a mixed $H_2/H_{\infty}$ control has been integrated with a NN-based observer to reduce the uncertainty and to ensure the stability of unmanned aerial vehicles.} Designing a Lyapunov-based nonlinear control determined from a NN has been discussed in \cite{rego2022lyapunov}, which uses the Lyapunov theory to compute the control law; note that online computations make this method inappropriate for real-time applications. {{\color{blue}}A NN-based adaptive control has been proposed in \cite{Esfandiari2021} for affine nonlinear system, and theoretical guarantees are developed under the assumption that the NN's approximation error is very small.} A stabilizing control law for personal aerial vehicles based on exponentially stabilizing control Lyapunov functions has been developed in \cite{jang2022dnlc} and \cite{jang2023uamdyncon}, without providing a formal stability proof with the developed NN in the loop. Ref. \cite{donti2020enforcing} integrates optimization-based projection layers into a neural network-based policy to improve robustness and performance of the system; note that this method is limited to linear systems. A NN-based adaptive control has been proposed in \cite{autenrieb2019development}, which is developed based upon the nonlinear dynamic inversion approach; note that this  method cannot be applied to a wide range of systems and does not provide stability and convergence proofs. A NN-based Lyapunov-induced control law is developed in \cite{ravanbakhsh2019learner}, which determines a control Lyapunov function for the system; since this method stops as soon as one control Lyapunov function is discovered, it can lead to a poor performance.




{{\color{blue}}Despite being promising in academic experiments, many key challenges about using NNs in control loops remain unsolved, which prevent society from deploying such approaches widely. In particular, to the best of our knowledge, the current literature on NN-based control lacks theoretical guarantees for stability and performance.} This paper aims at addressing this gap by introducing a new approach to provable NN-based control, where the controller we train with is co-developed so that it is amenable to the provable NN-based solution. First, this paper proposes a novel one-step-ahead predictive control scheme that determines the optimal control signal at any time instant, as well as a stabilizing Lyapunov function; stability and tracking properties of the proposed one-step-ahead predictive control scheme are theoretically proven by means of formal methods. This paper then discusses how to train a NN that mimics the behavior of the proposed one-step-ahead predictive control scheme. Also, it formally investigates stability and tracking properties of the closed-loop system when the trained NN is in the control loop, and characterizes the tracking error. Finally, this paper assesses the effectiveness of the proposed one-step-ahead predictive control scheme and the associated NN-based control scheme via extensive simulation studies on an inverted pendulum system and experimental results on a drone. Figure\ref{fig:paperchart} presents the general structure of the proposed approach.

\begin{figure}[!t]
    \centering
    \includegraphics[width=8.5cm]{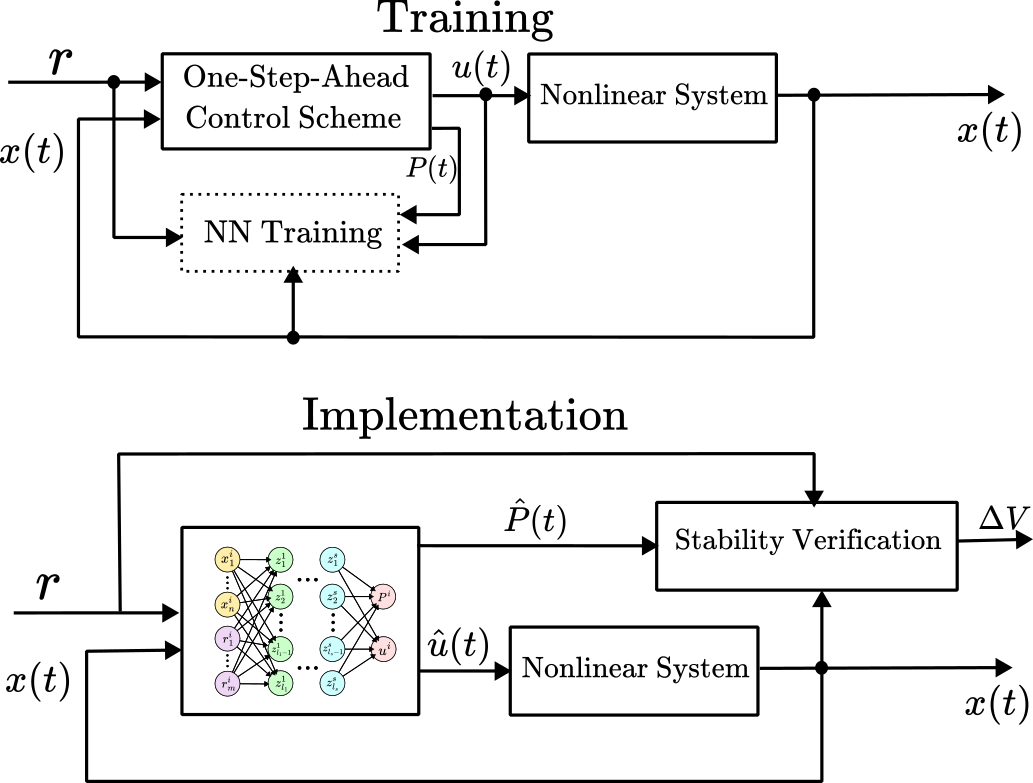}
    \caption{{{\color{blue}}General structure of proposed methodology.}}
    \label{fig:paperchart}
\end{figure}

{{\color{blue}}
The key contributions of this paper are: i) developing a novel one-step-ahead predictive control scheme for nonlinear systems and analytically proving its theoretical properties; and ii) developing a NN-based control scheme for nonlinear systems, formally proving its stability and convergence properties, and evaluating the effectiveness of the proposed control schemes via simulation and experimental studies}.



The main features of the propose NN-based control scheme compared with prior work are as follows. First, rather than considering a general Lyapunov function for the system, this paper learns a stabilizing quadratic Laypunov function for each operating point, which is simpler and more straightforward; such a feature broadens the range of the control problems that can be addressed by the proposed NN-based control method. Second, this paper characterizes the degradation in the tracking performance due to learning process, and provides a single design parameter to manipulate the degradation; this feature allows the practitioners to obtain the desired tracking performance by adjusting a single parameter without concerning about the accuracy of the NN in imitating the desired control policy. Third, the developed NN-based control scheme imitates the optimal solution according to the given objective function; this feature allows the practitioners to incorporate any objective functions into the scheme without changing its structure. Fourth, this paper linearizes the nonlinear system at any time instant and determines a control signal based on the current operating point of the system; such a features allows us to apply the proposed NN-based control method (probably with some minor modification) to time-varying linear systems (we leave studying theoretical guarantees for such applications to future work). Fifth, running time of the developed NN-based control scheme is significantly smaller than that of the corresponding optimization-based scheme (our numerical analysis shows that using a NN that imitate the behavior of the one-step-ahead predictive control scheme can improve the computing efficiency by $\sim$ 319\%); we believe that the proposed approach provides useful insights for future work on how to implement sophisticated and time-consuming control schemes despite limitations on available computing power, without hampering stability or significantly degrading the performance.


The rest of this paper is organized as follows. Section \ref{sec:PS} states the problem. Section \ref{sec:Solution} presents details of the proposed one-step-ahead predictive control method and discusses its theoretical properties. Section \ref{sec:Training} reports the data collection and NN training procedures. The NN-based control scheme is introduced in Section \ref{sec:NNControl} and its properties are proven analytically. Section \ref{sec:EV} evaluates the proposed method via extensive simulation studies. Finally, Section \ref{sec:Con} concludes the paper.

\paragraph*{Notation} We denote the set of real numbers by $\mathbb{R}$, the set of positive real numbers by $\mathbb{R}_{>0}$, and the set of non-negative real numbers by $\mathbb{R}_{\geq0}$. We use $\mathbb{Z}_{\geq0}$ to denote the set of non-negative integer numbers. For a matrix $A$, $A\succ0$ indicates that $A$ is positive definite, and $A\succeq0$ indicates that $A$ is positive semi-definite. We denote the transpose of matrix $A$ by $A^\top$. Given $x\in\mathbb{R}^n$ and $Q\in\mathbb{R}^{n\times n}$, $\left\Vert x\right\Vert_Q=\sqrt{\left\vert x^\top Qx\right\vert}$. Given the set $\Psi$, $|\Psi|$ indicates its cardinality. For a function $Y(x)$, $Y(x)|_{x=x^\dag}$ indicates that the function $Y(x)$ is evaluated at $x=x^\dag$. We use $I_n$ to denote $n\times n$ identity matrix.

\section{Problem Statement}\label{sec:PS}
Consider the following discrete-time affine  nonlinear system:
\begin{subequations}\label{eq:system}
\begin{align}
x(t+1) =& f\left(x(t)\right)+g\left(x(t)\right)u(t),\\
y(t)=&h\left(x(t),u(t)\right),
\end{align}
\end{subequations}
where $x(t)=[x_1(t)~...~x_n(t)] \in \mathbb{R}^n$ is the state vector at time instant $t$, $u(t)=[u_1(t)~...~u_p(t)] \in \mathbb{R}^p$ is the control input at time instant $t$, $y(t)=[y_1(t)~...~y_m(t)] \in \mathbb{R}^m$ is the output vector at time instant $t$, and $f:\mathbb{R}^n \rightarrow \mathbb{R}^n$, $g:\mathbb{R}^{n}\rightarrow \mathbb{R}^{n\times p}$, and $h:\mathbb{R}^n\times\mathbb{R}^p\rightarrow\mathbb{R}^m$ are known nonlinear functions. Let $\mathcal{X}\subseteq\mathbb{R}^n$ be the operating region\footnote{Note that this paper does not aim at enforcing the operating region as a constraint. Indeed, we will utilize the set $\mathcal{X}$ to determine the region of attraction of the proposed methods. Future work will discuss how to extend the proposed methods to guarantee state and input constraint satisfaction at all times.} of the system described in \eqref{eq:system}, i.e., $x(t)\in\mathcal{X},~\forall t\geq0$.

\begin{assumption}\label{Assumption:Linearization}
For any given $x^\dag\in\mathcal{X}$, let $A^\dag=\frac{\partial f(x)}{\partial x}|_{x=x^\dag}$ and $B^\dag=g(x^\dag)$; then, the pair $(A^\dag,B^\dag)$ is stabilizable. In other words, the linearized system around any point in the set $\mathcal{X}$ is stabilizable. 
\end{assumption}

\begin{assumption}\label{assumption:Lipchitz}
For any $x^\dag\in\mathcal{X}$, we have $\left\Vert f(x^\dag)-A^\dag x^\dag\right\Vert\leq\delta$, for some $\delta\in\mathbb{R}_{\geq0}$, where $A^\dag=\frac{\partial f(x)}{\partial x}|_{x=x^\dag}$. In other words, the linearization error can be upper-bounded with $\delta$ throughout the operating region $\mathcal{X}$. 
\end{assumption}

\begin{assumption}\label{assumption:Lipchitz2}
Functions $f(x)$ and $g(x)$ are $\mu_f$ and $\mu_g$ Lipschitz continuous, respectively, throughout the operating region $\mathcal{X}$. That is, for any $x^\dag\in\mathcal{X}$ and $x^\ddag\in\mathcal{X}$, we have $\left\Vert f(x^\dag)-f(x^\ddag)\right\Vert\leq\mu_f\left\Vert x^\dag-x^\ddag\right\Vert$ and $\left\Vert g(x^\dag)-g(x^\ddag)\right\Vert\leq\mu_g\left\Vert x^\dag-x^\ddag\right\Vert$.
\end{assumption}

Let $r \in \mathbb{R}^m$ be the desired reference. Let $\bar{x}_r$ and $\bar{u}_r$ be the steady state and control input, respectively, such that
\begin{align}\label{eq:SSconfiguration}
\bar{x}_r=f\left(\bar{x}_r\right)+g\left(\bar{x}_r\right)\bar{u}_r,~~r=h\left(\bar{x}_r,\bar{u}_r\right),
\end{align}
where $\bar{x}_r\in\mathcal{X}$. Such a reference signal is called a steady-state admissible reference; we denote the set of all steady-state admissible references by $\mathcal{R}\subseteq\mathbb{R}^m$.

This paper addresses the following problem.
\begin{problem}\label{Problem}
For any given $r\in\mathcal{R}$, design a NN-based control scheme that determines the optimal control input to steer the state of system \eqref{eq:system} to $\bar{x}_r$ and its steady input to $\bar{u}_r$. 
\end{problem}





\section{One-Step-Ahead Predictive Control}\label{sec:Solution}

Problem \ref{Problem} is a well-known problem in the literature and several methods have been presented to address it (see, e.g., \cite{dai2021lyapunov,jang2022dnlc,autenrieb2019development}). However, to the best of our knowledge, the current literature does not guarantee stability and convergence in the presence of training errors in an analytic manner. Also, prior work focuses on providing a stabilizing control input, without discussing optimality of the obtained solution.

To address Problem \ref{Problem}, this section proposes a novel one-step-ahead predictive control scheme that determines the optimal control input and a quadratic stabilizing Lyapunov function depending on the current states of the system. The procedure of training a NN that imitates the behavior of the developed one-step-ahead predictive control will be reported in Section \ref{sec:Training}. Finally, Section \ref{sec:NNControl} will develop a NN-based control scheme and will study its properties.

\subsection{Control Structure}\label{AA}
One possible approach to control system \eqref{eq:system} is to utilize the Model Predictive Control (MPC) framework \cite{camacho2007model,rawlings2017model}, which determines the control input by solving a receding horizon optimal control problem. While resulting problems are convex for linear systems, they are not necessarily convex for nonlinear systems \cite{Rawlings1994,Grune2011}, which creates challenges for stability proofs \cite{Allgower2000} and real-time implementations \cite{Hosseinzadeh2023RobustTermination}. One possible approach to address the above-mentioned issues is to iteratively linearize the nonlinear system and use MPC for linear systems. {{\color{blue}}It is well-known that \cite{Yin2015,HosseinzadehCANE} when the prediction horizon is large, prediction errors due to linearization (and to model uncertainty and external disturbances) can significantly degrade the performance of the system and can even lead to instability.} Prior work (e.g., \cite{Kambhampati2000}) suggests using one step ahead prediction to compute the control input at each time instant so as to bring the system states at the next time instant to a desired value. This subsection proposes a novel one-step-ahead predictive control for system \eqref{eq:system}, and investigates its stability and convergence properties. Once again, this paper does not aim at enforcing constraints on states and control inputs.

Given the desired reference $r\in\mathcal{R}$ and the current state $x(t)$, we determine the optimal control input $u^\ast(t)\in\mathbb{R}^p$ and the optimal Lyapunov matrix $P^\ast(t)\in\mathbb{R}^{n\times n}$ by solving the following optimization problem:
\begin{subequations}\label{eq:OptimizationProblemMain}{{\color{blue}}
\begin{align}\label{eq:CostFunction}
u^\ast(t),P^\ast(t)=\arg\,\min_{u,P}\,\left\Vert x^+-\bar{x}_r\right\Vert^2_{Q_x}+\left\Vert u-\bar{u}_r\right\Vert^2_{Q_u}
+V\left(x(t),r,P\right)^2,
\end{align}}
subject to the following constraints:
\begin{align}
& x^+=A_tx(t)+B_tu,\label{eq:Constraint1}\\
& P\succ0,\label{eq:Constraint2}\\
& V(x^+,r,P)-V\left(x(t),r,P\right)\leq-\theta\left\Vert x(t)-\bar{x}_r\right\Vert,\label{eq:Constraint3}
\end{align}
\end{subequations}
where $A_t$ and $B_t$ describe the linearized system around the current state $x(t)$ (see Assumption \ref{Assumption:Linearization}), $\theta\in\mathbb{R}_{>0}$ is a design parameter, $V(x,r,P):=\left\Vert x-\bar{x}_r\right\Vert_{P}=\sqrt{\left\vert(x-\bar{x}_r)^\top P(x-\bar{x}_r)\right\vert}$ is the Lyapunov function\footnote{For any $r$ and $P\succ0$, the function $V(x,r,P):=\left\Vert x-\bar{x}_r\right\Vert_{P}$ satisfies the Lyapunov conditions, i.e., i) $V(x,r,P)\geq0$ for all $x$; ii) $V(x,r,P)=0$ if and only if $x=\bar{x}_r$; and iii) $V(x,r,P)\rightarrow\infty$ if $\left\Vert x\right\Vert\rightarrow\infty$.}, $x^+$ is the one-step-ahead prediction computed based on the linearized model around $x(t)$, and $Q_x=Q_x^{\top} \succeq 0$ ($Q_x \in \mathbb{R}^{n \times n}$) and $Q_u=Q_u^{\top} \succ 0$ ($Q_u \in \mathbb{R}^{p \times p}$) are weighting matrices.

The first and second terms in the cost function \eqref{eq:CostFunction} penalizes deviation from the desired steady state $\bar{x}_r$ and steady input $\bar{u}_r$, respectively. The last term in cost function \eqref{eq:CostFunction} penalizes the Lyapunov function $V(x)$ (note that the main advantage of this term will be shown later). Constraint \eqref{eq:Constraint1} enforces the linearized dynamics, constraint \eqref{eq:Constraint2} enforces positive definiteness of the Lyapunov matrix $P$, and constraint \eqref{eq:Constraint3} indicates that the obtained control input must be stabilizing at the current time instant $t$.

\begin{remark}\label{eq:ExponentialStability}
Let system \eqref{eq:system} be linear, i.e., $x(t+1)=Ax(t)+Bu(t)$. In this case, it is well-known that one can determine a single constant Lyapunov matrix $P\succ0$. It can be shown that if the matrix $P$ satisfies $\sqrt{\lambda_{\min}(P)}>\theta$, where $\lambda_{\min}(P)$ indicates the smallest eigenvalue of $P$, constraint \eqref{eq:Constraint3} imposes exponential stability (see \cite{Aitken1994} for the definition of exponential stability in discrete-time linear systems).
\end{remark}

\begin{remark}\label{remark:LocalOptimality}
Let system \eqref{eq:system} be linear, i.e., $x(t+1)=Ax(t)+Bu(t)$. In this case, the finite-time LQR control law with prediction horizon of one to steer the states of the system to $\bar{x}_r$ can be obtained by solving the following optimization problem:
\begin{align}\label{eq:LQR}
u^\ast(t)=\arg\,\min_{u}\,\left\Vert x(t)-\bar{x}_r\right\Vert^2_{R_x}+\left\Vert u-\bar{u}_r\right\Vert^2_{R_u} +\left\Vert x(t+1)-\bar{x}_r\right\Vert^2_{R_f},
\end{align}
where $R_x=R_x^{\top} \succ 0$ ($R_x\in\mathbb{R}^{n \times n}$), $R_u=R_u^{\top} \succ 0$ ($R_u\in \mathbb{R}^{p \times p}$), and $R_f=R_f^{\top}\succeq 0$ ($R_f \in \mathbb{R}^{n \times n}$). Comparing \eqref{eq:OptimizationProblemMain} with \eqref{eq:LQR} yields that, in the case of a linear system, the proposed one-step-ahead predictive control scheme provides the finite-time LQR control law with prediction horizon of one, where the weighting matrices are $R_x=P$, $R_u=Q_u$, and $R_f=Q_x$.
\end{remark}

\begin{remark}\label{remark:InfiniteHorizonLQR}
Although this paper considers one step ahead prediction in the cost function \eqref{eq:CostFunction}, it is possible to extend the obtained results to problems with larger prediction horizon. In particular, it is possible to consider an infinite-horizon prediction horizon in \eqref{eq:CostFunction}, as $\sum_{k=1}^\infty\left\Vert \hat{x}(k)-\bar{x}_r\right\Vert^2_{Q_x}+\sum_{k=0}^\infty\left\Vert u(k)-\bar{u}_r\right\Vert^2_{Q_u}+V\left(x(t),r,P\right)^2$, where $\hat{x}(k+1)=A\hat{x}(k)+Bu(k)$ with $\hat{x}(0)=x(t)$, is the predicted state vector based on the linearized model. Such a cost function resembles the iterative LQR with infinite horizon; see, e.g., \cite{Li2004,Todorov2005}. Note that, when the linearization error $\delta$ as in Assumption \ref{assumption:Lipchitz} is large, or the system is subject to model uncertainty and/or external disturbances, considering a large prediction horizon can significantly degrade the performance, and thus it is undesired.  
\end{remark}

\subsection{Theoretical Analysis}

This section studies theoretical properties of the proposed one-step-ahead predictive control scheme. First, we study the recursive feasibility of the proposed scheme.

\begin{theorem}[Recursive Feasibility]\label{theorem:feasibility}
Consider system \eqref{eq:system} and suppose that \eqref{eq:OptimizationProblemMain} is feasible at $t=0$. Then, it remains feasible for all $t>0$.
\end{theorem}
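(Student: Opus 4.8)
The plan is to reduce the claim to a \emph{pointwise} feasibility statement over the operating region. First I would note that the constraints \eqref{eq:Constraint1}--\eqref{eq:Constraint3} depend on the closed-loop history only through the current state: the linearization matrices $A_t,B_t$ are functions of $x(t)$ alone (Assumption~\ref{Assumption:Linearization}), and once $x(t)$ is fixed the admissible set of $(u,P)$ is completely determined. Thus the feasible region is a state-dependent set $\mathcal{F}\subseteq\mathbb{R}^n$, and ``feasibility at time $t$'' is exactly the statement $x(t)\in\mathcal{F}$. Since the closed-loop trajectory remains in $\mathcal{X}$ for all $t\ge 0$, Theorem~\ref{theorem:feasibility} follows once I show $\mathcal{X}\subseteq\mathcal{F}$, i.e. that the constraint set of \eqref{eq:OptimizationProblemMain} is nonempty for every $x\in\mathcal{X}$ (which then makes feasibility hold for all $t$, the $t=0$ hypothesis being subsumed).

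To exhibit a feasible pair at a given $x\in\mathcal{X}$, I would invoke Assumption~\ref{Assumption:Linearization}: letting $A$ and $B$ denote the linearization matrices of that assumption evaluated at the point $x$, stabilizability of $(A,B)$ gives a gain $K$ with $M:=A+BK$ Schur. Solving the discrete Lyapunov equation $M^\top P_0 M - P_0=-I_n$ yields $P_0\succ0$ with $M^\top P_0 M = P_0-I_n \preceq \gamma^2 P_0$, where $\gamma^2:=1-1/\lambda_{\max}(P_0)\in[0,1)$, equivalently $\|Mv\|_{P_0}\le\gamma\|v\|_{P_0}$ for all $v$. I would then take the control as a tracking feedback $u=\bar u_r+K(x-\bar x_r)+w$ and pick $w$ so that the linearized successor satisfies $x^+-\bar x_r=M(x-\bar x_r)$; this gives $V(x^+,r,P_0)=\|M(x-\bar x_r)\|_{P_0}\le\gamma\|x-\bar x_r\|_{P_0}$, hence $V(x^+,r,P_0)-V(x,r,P_0)\le-(1-\gamma)\|x-\bar x_r\|_{P_0}\le-(1-\gamma)\sqrt{\lambda_{\min}(P_0)}\,\|x-\bar x_r\|$.

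Finally I would absorb the design constant $\theta$ by scaling: replacing $P_0$ with $P:=\beta P_0$ leaves the contraction factor $\gamma$ unchanged but scales $\lambda_{\min}$ by $\beta$, so choosing $\beta\ge\theta^2/\big((1-\gamma)^2\lambda_{\min}(P_0)\big)$ makes $(1-\gamma)\sqrt{\lambda_{\min}(P)}\ge\theta$, and constraint \eqref{eq:Constraint3} holds for the pair $(u,P)$. Constraints \eqref{eq:Constraint1} and \eqref{eq:Constraint2} hold by construction, so $(u,P)$ is feasible for \eqref{eq:OptimizationProblemMain} at $x$, which completes the argument.

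The main obstacle is the choice of $w$ cancelling the offset, i.e. actually verifying $\mathcal{X}\subseteq\mathcal{F}$ uniformly -- in particular as $x\to\bar x_r$, where the right-hand side of \eqref{eq:Constraint3} vanishes and one is forced to reach $x^+=\bar x_r$ exactly under the linearized model $x^+=A_t x(t)+B_t u$. This requires the offset $(A-I_n)\bar x_r+B\bar u_r$, which by \eqref{eq:SSconfiguration} equals $\big(A\bar x_r-f(\bar x_r)\big)+\big(B-g(\bar x_r)\big)\bar u_r$, to lie in the range of $B$ so that $Bw=-\big[(A-I_n)\bar x_r+B\bar u_r\big]$ is solvable; it is here that Assumptions~\ref{assumption:Lipchitz}--\ref{assumption:Lipchitz2} and the steady-state relations \eqref{eq:SSconfiguration} must be brought to bear, and it is immediate when $\bar x_r$ is taken at the origin (as in the inverted-pendulum and drone case studies). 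The remaining steps -- the Lyapunov-equation construction and the scaling by $\beta$ -- are routine.
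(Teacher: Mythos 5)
Your overall strategy --- reduce the claim to pointwise feasibility over $\mathcal{X}$ and exhibit a feasible pair $(u,P)$ at each state by invoking stabilizability of the linearization --- is exactly the route the paper takes; the paper's proof is, in effect, your first two sentences and nothing more, asserting that Assumption~\ref{Assumption:Linearization} makes the constraint set nonempty without constructing $P$ or checking \eqref{eq:Constraint3} against the prescribed rate $\theta$. Your discrete Lyapunov-equation construction and the rescaling $P=\beta P_0$ are correct and supply what the paper leaves implicit: the contraction factor $\gamma$ is invariant under scaling while $\sqrt{\lambda_{\min}(\beta P_0)}$ grows like $\sqrt{\beta}$, so the guaranteed decrease can be made to dominate any fixed $\theta$.

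However, the ``obstacle'' you flag in your last paragraph is a genuine gap, and your proposal does not close it. Constraint \eqref{eq:Constraint3} evaluated at (or arbitrarily near) $x=\bar{x}_r$ forces the linearized successor to reach $\bar{x}_r$ exactly, which, using \eqref{eq:SSconfiguration} and $B=g(\bar{x}_r)$, requires the linearization residual $f(\bar{x}_r)-A\bar{x}_r$ to lie in $\mathrm{range}\left(g(\bar{x}_r)\right)$. Assumption~\ref{assumption:Lipchitz} only bounds this residual by $\delta$; it does not place it in the range of $g(\bar{x}_r)$, and the rescaling trick cannot absorb it because the uncancelled offset $e$ contributes $\left\Vert e\right\Vert_{\beta P_0}=\sqrt{\beta}\left\Vert e\right\Vert_{P_0}$, which grows at the same rate as the decrease term. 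So unless $g(\bar{x}_r)$ has full row rank, $\delta=0$, or the states are kept bounded away from $\bar{x}_r$, the feasible set of \eqref{eq:OptimizationProblemMain} can be empty near the equilibrium, and your $\mathcal{X}\subseteq\mathcal{F}$ step fails there. Be aware that the paper's own proof does not resolve this either --- it never engages with \eqref{eq:Constraint3} quantitatively --- so what you have diagnosed is a gap in the result as stated under the given assumptions, not merely in your write-up; closing it requires an added matching/range hypothesis or a relaxation of \eqref{eq:Constraint3} (e.g.\ a slack of order $\delta$ on its right-hand side).
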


\begin{proof}
Under Assumption \ref{Assumption:Linearization}, it is possible to determine a control input such that the dynamics of the linearized system around any point in the operating region $\mathcal{X}$ is stable. Thus, no further effort is required to show that if $x(t)\in\mathcal{X}$, the optimization problem \eqref{eq:OptimizationProblemMain} is feasible; this completes the proof. 
\end{proof}

Next, we study the stability and convergence of the one-step-ahead predictive control scheme given in \eqref{eq:OptimizationProblemMain}.

\begin{theorem}[Stability and Convergence]\label{theorem:MainController}
Consider system \eqref{eq:system} and suppose that the control scheme described in \eqref{eq:OptimizationProblemMain} is used to control it. Then, for any given $r\in\mathcal{R}$, the tracking error $\left\Vert x(t)-\bar{x}_r\right\Vert$ remains bounded, and there exists $\sigma\in\mathbb{R}_{>0}$ such that $\left\Vert x(t)-\bar{x}_r\right\Vert\leq\sigma$ as $t\rightarrow\infty$. 
\end{theorem}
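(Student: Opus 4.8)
The plan is to use the optimal value of the cost function as a Lyapunov-like function for the closed-loop system and to exploit constraint \eqref{eq:Constraint3} to obtain a decrease condition, while carefully accounting for the mismatch between the linearized prediction $x^+$ and the true next state $x(t+1)$. Concretely, let $J^\ast(t)$ denote the optimal value of \eqref{eq:CostFunction} at time $t$, and note that $V(x(t),r,P^\ast(t))^2$ is one of its summands, so $V(x(t),r,P^\ast(t)) \le \sqrt{J^\ast(t)}$. The first step is to quantify the prediction error: by Assumption \ref{assumption:Lipchitz}, $\left\Vert f(x(t)) - A_t x(t) \right\Vert \le \delta$, and since $x(t+1) = f(x(t)) + g(x(t))u^\ast(t)$ while $x^+ = A_t x(t) + B_t u^\ast(t)$ with $B_t = g(x(t))$, we get $\left\Vert x(t+1) - x^+ \right\Vert \le \delta$. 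Hence the true next state lands within a $\delta$-ball of the predicted state for which constraint \eqref{eq:Constraint3} guarantees a strict Lyapunov decrease.

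Next I would propagate this through the Lyapunov function. Using the triangle inequality for the $P$-weighted norm (or a bound of the form $\left\Vert a \right\Vert_P \le \left\Vert b \right\Vert_P + \sqrt{\lambda_{\max}(P)}\left\Vert a - b\right\Vert$), one obtains
\begin{align}
V(x(t+1),r,P^\ast(t)) \le V(x^+,r,P^\ast(t)) + \sqrt{\lambda_{\max}(P^\ast(t))}\,\delta
\le V(x(t),r,P^\ast(t)) - \theta\left\Vert x(t) - \bar{x}_r\right\Vert + \sqrt{\lambda_{\max}(P^\ast(t))}\,\delta. \nonumber
\end{align}
This shows that, as long as $\theta\left\Vert x(t)-\bar{x}_r\right\Vert > \sqrt{\lambda_{\max}(P^\ast(t))}\,\delta$, the Lyapunov function evaluated with the \emph{fixed} matrix $P^\ast(t)$ strictly decreases from $x(t)$ to $x(t+1)$. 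The set where this fails defines a candidate residual neighborhood whose size is governed by $\delta$, $\theta$, and a uniform bound on $\lambda_{\max}(P^\ast(t))$ over $t$; the radius $\sigma$ will come out of this inequality. To make the decrease argument rigorous across time steps — where the Lyapunov matrix changes from $P^\ast(t)$ to $P^\ast(t+1)$ — I would either invoke a uniform equivalence of the norms $\left\Vert\cdot\right\Vert_{P^\ast(t)}$ (using uniform upper/lower eigenvalue bounds on $P^\ast(t)$, which hold because $x(t)\in\mathcal{X}$ with $\mathcal{X}$ bounded and the problem data continuous), or argue directly that $\left\Vert x(t)-\bar{x}_r\right\Vert$ is squeezed between multiples of $V$, so a decrease in $V$ forces eventual entry into a bounded set in the Euclidean metric. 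Boundedness of the tracking error then follows because outside the residual neighborhood the (suitably normalized) Lyapunov function is monotonically decreasing, hence $x(t)$ cannot escape to infinity; and asymptotic convergence to a ball of radius $\sigma$ follows from a standard argument that the decrease is bounded away from zero outside any larger neighborhood.

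The main obstacle I anticipate is handling the time-varying Lyapunov matrix $P^\ast(t)$ cleanly: constraint \eqref{eq:Constraint3} only compares $V(\cdot,r,P)$ at two consecutive states using the \emph{same} $P$, so the standard telescoping/monotonicity argument does not apply verbatim. The fix is to establish uniform constants $0 < \underline{p} \le \lambda_{\min}(P^\ast(t)) \le \lambda_{\max}(P^\ast(t)) \le \bar{p} < \infty$ for all $t$ — which requires an argument that the minimizer $P^\ast(t)$ stays in a compact set, plausibly by adding/assuming a normalization or by noting the cost penalizes large $P$ implicitly through the $V^2$ term — and then to pass everything to the Euclidean norm $\left\Vert x(t)-\bar{x}_r\right\Vert$, incurring only the fixed condition-number factor $\sqrt{\bar{p}/\underline{p}}$. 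A secondary subtlety is that $\left\Vert\cdot\right\Vert_P$ here is defined with an absolute value inside the square root, so I would confirm $P \succ 0$ makes the argument of the absolute value nonnegative and the function genuinely a norm, as the paper's footnote asserts. Once these uniformity facts are in hand, the residual radius is $\sigma = c\,\bar{p}\,\delta/\theta$ for an explicit constant $c$ depending only on the eigenvalue bounds, giving the tunable proximity threshold promised in the abstract.
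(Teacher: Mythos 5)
Your first half matches the paper's core step: writing $x(t+1)=f(x(t))+g(x(t))u^\ast(t)$, adding and subtracting $A_tx(t)$, invoking Assumption \ref{assumption:Lipchitz} to bound the linearization mismatch by $\sqrt{\lambda_{\max}(P^\ast(\cdot))}\,\delta$, and then using constraint \eqref{eq:Constraint3} to get the one-step decrease
$V(x(t+1),r,P^\ast(t))\le V(x(t),r,P^\ast(t))-\theta\left\Vert x(t)-\bar{x}_r\right\Vert+\sqrt{\lambda_{\max}(P^\ast(t))}\,\delta$ is exactly the paper's inequality \eqref{eq:DV4} (up to which of $P^\ast(t),P^\ast(t+1)$ weights the error term). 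You also correctly identify the crux: \eqref{eq:Constraint3} compares two states under the \emph{same} $P$, so the decrease does not telescope across the switch from $P^\ast(t)$ to $P^\ast(t+1)$.

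The gap is in your proposed fix for that crux. Passing between $\left\Vert\cdot\right\Vert_{P^\ast(t)}$ and $\left\Vert\cdot\right\Vert_{P^\ast(t+1)}$ via uniform eigenvalue bounds costs a multiplicative factor $\sqrt{\bar{p}/\underline{p}}\ge 1$, and this factor is applied to the \emph{entire} Lyapunov value, not just to the residual term. Chaining your inequality gives
\begin{align}
V\bigl(x(t+1),r,P^\ast(t+1)\bigr)\le \sqrt{\bar{p}/\underline{p}}\Bigl(V\bigl(x(t),r,P^\ast(t)\bigr)-\theta\left\Vert x(t)-\bar{x}_r\right\Vert+\sqrt{\bar{p}}\,\delta\Bigr),\nonumber
\end{align}
and since $V(x(t),r,P^\ast(t))$ can be as large as $\sqrt{\bar{p}}\left\Vert x(t)-\bar{x}_r\right\Vert$, the right-hand side exceeds $V(x(t),r,P^\ast(t))$ whenever $\theta\le\sqrt{\bar{p}}-\sqrt{\underline{p}}$ (and similarly the Euclidean recursion $\left\Vert x(t+1)-\bar{x}_r\right\Vert\le\frac{\sqrt{\bar{p}}-\theta}{\sqrt{\underline{p}}}\left\Vert x(t)-\bar{x}_r\right\Vert+\sqrt{\bar{p}/\underline{p}}\,\delta$ is only a contraction under the same kind of condition). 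So your route proves the theorem only under an additional hypothesis on $\theta$ relative to the conditioning of the $P^\ast(t)$'s, which is not in the statement. The paper avoids this entirely with a different idea: at time $t+1$ it compares the optimal cost of $(u^\ast(t+1),P^\ast(t+1))$ against the candidate $(u^\ast(t+1),P^\ast(t))$; since the two costs differ only in the $V^2$ term, optimality gives $\left\Vert x(t+1)-\bar{x}_r\right\Vert_{P^\ast(t+1)}\le\left\Vert x(t+1)-\bar{x}_r\right\Vert_{P^\ast(t)}$ pointwise at $x(t+1)$, i.e., the matrix switch can only help, with no condition-number loss; this is precisely why the $V^2$ term was put in the cost \eqref{eq:CostFunction}. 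That yields $\Delta V(t)\le 3\sqrt{\bar{\lambda}_P}\delta-\theta\left\Vert x(t)-\bar{x}_r\right\Vert$ and $\sigma=3\sqrt{\bar{\lambda}_P}\delta/\theta$ for every $\theta>0$. (To your credit, you flag that the uniform bounds $\underline{p},\bar{p}$ themselves need justification; the paper simply asserts finiteness of $\sup_t\lambda_{\max}(P^\ast(t))$ and positivity of $\inf_t\lambda_{\min}(P^\ast(t))$ without proof, so that concern applies to both arguments.)
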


\begin{proof}
Let $\big(u^\ast(t),P^\ast(t)\big)$ and  $\big(u^\ast(t+1),P^\ast(t+1)\big)$ be the optimal solutions at time instants $t$ and $t+1$, respectively. To prove this theorem, we only need to show that $\Delta V(t):=V\left(x(t+1),r,P^\ast(t+1)\right)-V\left(x(t),r,P^\ast(t)\right)<0$ whenever $\left\Vert x(t)-\bar{x}_r\right\Vert>\sigma$ for some $\sigma\in\mathbb{R}_{\geq0}$.

We have:
\begin{align}\label{eq:DV1}
\Delta V(t)=&\left\Vert x(t+1)-\bar{x}_r\right\Vert_{P^\ast(t+1)}-\left\Vert x(t)-\bar{x}_r\right\Vert_{P^\ast(t)}\nonumber\\
=&\left\Vert f\left(x(t)\right)+g\left(x(t)\right)u^\ast(t)-\bar{x}_r\right\Vert_{P^\ast(t+1)}-\left\Vert x(t)-\bar{x}_r\right\Vert_{P^\ast(t)},
\end{align}

We add and subtract the term $A_tx(t)$ from the first norm in the right-hand side of \eqref{eq:DV1}, where $A_t$ representing the linearized dynamics at time instant $t$ (see Assumption \ref{Assumption:Linearization}). Thus, according to triangle inequality\footnote{Given $z_1,z_2\in\mathbb{R}^n$ and $Q\succ0$ ($Q\in\mathbb{R}^{n\times n}$), we have $\left\Vert z_1+z_2\right\Vert_Q^2=z_1^\top Q^{\frac{1}{2}}Q^{\frac{1}{2}}z_1+z_2^\top Q^{\frac{1}{2}}Q^{\frac{1}{2}}z_2+2z_1^\top Q^{\frac{1}{2}}Q^{\frac{1}{2}}z_2\leq\left\Vert Q^{\frac{1}{2}}z_1\right\Vert^2+\left\Vert Q^{\frac{1}{2}}z_2\right\Vert^2+2\left\Vert Q^{\frac{1}{2}}z_1\right\Vert\left\Vert Q^{\frac{1}{2}}z_2\right\Vert$, which implies that $\left\Vert z_1+z_2\right\Vert_Q^2\leq\left(\left\Vert Q^{\frac{1}{2}}z_1\right\Vert+\left\Vert Q^{\frac{1}{2}}z_2\right\Vert\right)^2$, or $\left\Vert z_1+z_2\right\Vert_Q^2\leq\left(\left\Vert z_1\right\Vert_Q+\left\Vert z_2\right\Vert_Q\right)^2$. Thus, we have $\left\Vert z_1+z_2\right\Vert_Q\leq\left\Vert z_1\right\Vert_Q+\left\Vert z_2\right\Vert_Q$.}, we have:
\begin{align}\label{eq:DV2}
\Delta V(t)\leq \left\Vert f\left(x(t)\right)-A_tx(t)\right\Vert_{P^\ast(t+1)}
+\left\Vert A_tx(t)+g\left(x(t)\right)u^\ast(t)-\bar{x}_r\right\Vert_{P^\ast(t+1)}
-\left\Vert x(t)-\bar{x}_r\right\Vert_{P^\ast(t)},
\end{align}
which according to Assumption \ref{assumption:Lipchitz} implies that\footnote{Given $z\in\mathbb{R}^n$ and $Q\succ0$ ($Q\in\mathbb{R}^{n\times n}$), we have $\lambda_{\min}(Q)\left\Vert z\right\Vert^2\leq\left\Vert z\right\Vert_Q^2\leq\lambda_{\max}(Q)\left\Vert z\right\Vert^2$, which implies that $\sqrt{\lambda_{\min}(Q)}\left\Vert z\right\Vert\leq\left\Vert z\right\Vert_Q\leq\sqrt{\lambda_{\max}(Q)}\left\Vert z\right\Vert$.}:
\begin{align}\label{eq:DV2}
\Delta V(t)\leq\sqrt{\lambda_{\text{max}}\left(P^\ast(t+1)\right)}\delta-\left\Vert x(t)-\bar{x}_r\right\Vert_{P^\ast(t)}
+\left\Vert A_tx(t)+g\left(x(t)\right)u^\ast(t)-\bar{x}_r\right\Vert_{P^\ast(t+1)},
\end{align}
where $\lambda_{\text{max}}\left(P^\ast(t+1)\right)\in\mathbb{R}_{>0}$ is the maximum eigenvalue of matrix $P^\ast(t+1)$. 

Adding and subtracting the term $\left\Vert A_tx(t)+g\left(x(t)\right)u^\ast(t)-\bar{x}_r\right\Vert_{P^\ast(t)}$ to the right-hand side of the inequality \eqref{eq:DV2} yields:
\begin{align}\label{eq:DV3}
\Delta V(t)\leq&\sqrt{\lambda_{\text{max}}\left(P^\ast(t+1)\right)}\delta-\left\Vert x(t)-\bar{x}_r\right\Vert_{P^\ast(t)}
+\left\Vert A_tx(t)+g\left(x(t)\right)u^\ast(t)-\bar{x}_r\right\Vert_{P^\ast(t+1)}\nonumber\\
&-\left\Vert A_tx(t)+g\left(x(t)\right)u^\ast(t)-\bar{x}_r\right\Vert_{P^\ast(t)}
+\left\Vert A_tx(t)+g\left(x(t)\right)u^\ast(t)-\bar{x}_r\right\Vert_{P^\ast(t)},
\end{align}
which according to the fact that $\left\Vert A_tx(t)+g\left(x(t)\right)u^\ast(t)-\bar{x}_r\right\Vert_{P^\ast(t)}-\left\Vert x(t)-\bar{x}_r\right\Vert_{P^\ast(t)}\leq-\theta\left\Vert x(t)-\bar{x}_r\right\Vert$ (this is a direct implication from the optimization problem \eqref{eq:OptimizationProblemMain}) implies that:
\begin{align}\label{eq:DV4}
\Delta V(t)\leq&\sqrt{\lambda_{\text{max}}\left(P^\ast(t+1)\right)}\delta-\theta\left\Vert x(t)-\bar{x}_r\right\Vert\nonumber
+\left\Vert A_tx(t)+g\left(x(t)\right)u^\ast(t)-\bar{x}_r\right\Vert_{P^\ast(t+1)}\nonumber\\
&-\left\Vert A_tx(t)+g\left(x(t)\right)u^\ast(t)-\bar{x}_r\right\Vert_{P^\ast(t)}.
\end{align}

Let $J\big(u(t),P(t)|x(t),r\big)$ and $J\big(u(t+1),P(t+1)|x(t+1),r\big)$ be the cost of the control scheme given in \eqref{eq:OptimizationProblemMain} at time instants $t$ and $t+1$, respectively. According to the optimality of the solution $\big(u^\ast(t+1),P^\ast(t+1)\big)$ at time instant $t+1$, we have:
\begin{align}\label{eq:Optimality1}
J\big(u^\ast(t+1),P^\ast(t+1)|x(t+1),r\big)\leq 
J\big(u^\ast(t+1),P^\ast(t)|x(t+1),r\big),
\end{align}
which implies that
\begin{align}
&\left\Vert A_{t+1}x(t+1)+B_{t+1}u^\ast(t+1)-\bar{x}_r\right\Vert^2_{Q_x}+\left\Vert u^\ast(t+1)-\bar{u}_r\right\Vert^2_{Q_u}\nonumber
+\left\Vert x(t+1)-\bar{x}_r\right\Vert_{P^\ast(t+1)}^2\\
&\leq\left\Vert A_{t+1}x(t+1)+B_{t+1}u^\ast(t+1)-\bar{x}_r\right\Vert^2_{Q_x}
+\left\Vert u^\ast(t+1)-\bar{u}_r\right\Vert^2_{Q_u}+\left\Vert x(t+1)-\bar{x}_r\right\Vert_{P^\ast(t)}^2.
\end{align}

Thus, we have:
\begin{align}\label{eq:DV5}
&\left\Vert x(t+1)-\bar{x}_r\right\Vert_{P^\ast(t+1)}^2\leq\left\Vert x(t+1)-\bar{x}_r\right\Vert_{P^\ast(t)}^2.
\end{align}
or
\begin{align}\label{eq:DV5}
&\left\Vert x(t+1)-\bar{x}_r\right\Vert_{P^\ast(t+1)}\leq\left\Vert x(t+1)-\bar{x}_r\right\Vert_{P^\ast(t)}.
\end{align}

Replacing $x(t+1)$ with $f\left(x(t)\right)+g\left(x(t)\right)u^\ast(t)$ in \eqref{eq:DV5}, and adding and subtracting $A_tx(t)$, it follows from \eqref{eq:DV5} that\footnote{Given $z_1,z_2\in\mathbb{R}^n$ and $Q\succ0$ ($Q\in\mathbb{R}^{n\times n}$), following the arguments similar to footnote 3, it can be shown that $\left\Vert z_1\right\Vert_Q-\left\Vert z_2\right\Vert_Q\leq\left\Vert z_1+z_2\right\Vert_Q\leq\left\Vert z_1\right\Vert_Q+\left\Vert z_2\right\Vert_Q$.}:
\begin{align}\label{eq:DV6}
&\left\Vert A_tx(t)+g\left(x(t)\right)u^\ast(t)-\bar{x}_r\right\Vert_{P^\ast(t+1)}\leq\nonumber\\
&\left\Vert f\left(x(t)\right)-A_tx(t)\right\Vert_{P^\ast(t+1)}+\left\Vert f\left(x(t)\right)-A_tx(t)\right\Vert_{P^\ast(t)}+\left\Vert A_tx(t)+g\left(x(t)\right)u^\ast(t)-\bar{x}_r\right\Vert_{P^\ast(t)}\leq\nonumber\\
&\sqrt{\lambda_{\text{max}}\left(P^\ast(t+1)\right)}\delta+\sqrt{\lambda_{\text{max}}\left(P^\ast(t)\right)}\delta
+\left\Vert A_tx(t)+g\left(x(t)\right)u^\ast(t)-\bar{x}_r\right\Vert_{P^\ast(t)}.
\end{align}

Thus, according to \eqref{eq:DV4} and \eqref{eq:DV6}, we have:
\begin{align}\label{eq:DV7}
\Delta V(t)\leq&2\sqrt{\lambda_{\text{max}}\left(P^\ast(t+1)\right)}\delta+\sqrt{\lambda_{\text{max}}\left(P^\ast(t)\right)}\delta-\theta\left\Vert x(t)-\bar{x}_r\right\Vert.
\end{align}

Setting $\sigma=\frac{3\sqrt{\bar{\lambda}_P}\delta}{\theta}$, where $\bar{\lambda}_P=\sup_{t\geq0}\lambda_{\text{max}}\left(P^\ast(t)\right)$ ($\bar{\lambda}_P\in\mathbb{R}_{>0}$), no further effort is needed to complete the proof.
\end{proof}

\begin{remark}
Since $\theta$ is a design parameter, the upper-bound of the tracking error (i.e., $\sigma$) can be made arbitrarily small. However, our numerical experiments show that the optimizaiton problem \eqref{eq:OptimizationProblemMain} can become  numerically ill-conditioned for large values of $\theta$. Future work will provide techniques to mitigate numerical issues in the proposed methodology. 
\end{remark}

\begin{remark}
It is obvious that for linear systems, since $\delta=0$, the equilibrium point $\bar{x}_r$ is asymptotically stable with the proposed one-step-ahead predictive control given in \eqref{eq:OptimizationProblemMain}. 
\end{remark}

\begin{remark}
Theorem \ref{theorem:MainController} indicates that, despite the conventional nonlinear control methods, one can manipulate the achieved tracking error by adjusting only one scalar in the proposed one-step-ahead predictive control given in \eqref{eq:OptimizationProblemMain}. 
\end{remark}



\section{Data Collection and NN Training}\label{sec:Training}
{{\color{blue}}
At any time instant $t$, the proposed one-step-ahead predictive control receives the desired reference $r$ and the current state vector $x(t)$, and solves the optimization problem \eqref{eq:OptimizationProblemMain} to compute the control input $u(t)$ and the Lyapunov matrix$P(t)$. However, solving the optimization problem \eqref{eq:OptimizationProblemMain} can be computationally challenging, and may not be realistic for real-time applications. To address this issue, we propose to train a NN to approximate the relationship between input parameters of the optimization problem \eqref{eq:OptimizationProblemMain} (i.e., state vector $x(t)$ and desired reference $r$) and output parameters of the optimization problem \eqref{eq:OptimizationProblemMain} (i.e., control input $u(t)$ and Lyapunov matrix $P(t)$), and use it in the loop to control system \eqref{eq:system}. Our intuition is that using the NN significantly decreases the computational burden of the proposed control scheme, as it reduces the problem of computing $u(t)$ and $P(t)$ into simple function evaluations. To do so, this section discusses the data collection and training procedure to train a NN that imitates the behavior of the one-step-ahead predictive control scheme developed in Section \ref{sec:Solution} in the operating region $\mathcal{X}$. Section \ref{sec:NNControl} will investigate the theoretical properties of the closed-loop system with the NN in the loop in the presence of NN's approximation errors.
}

To obtain the training dataset, first, we divide the operating region $\mathcal{X}$ and the set of steady-state admissible references $\mathcal{R}$ into $N_x$ and $N_r$ grid cells, respectively; this process generates $N_x\cdot N_r$ data points which are denoted by $(x^i,r^i),~i=1,\cdots,N_x\cdot N_r$, where $x^i\in\mathcal{X}$ and $r^i\in\mathcal{R}$. Then, for each data point $(x^i,r^i)$, we solve the optimization problem \eqref{eq:OptimizationProblemMain} under the assumption that $x(t)=x^i$; this yields the optimal control input (denoted by $u^i$) and optimal Lyapunov matrix (denoted by $P^i$) for the data point $(x^i,r^i)$. Finally, the training dataset can be constructed as follows:
\begin{align}
\mathbb{D}_{training}=\left\{\left(x^i,r^i,u^i,P^i\right),~i=1\cdots,N_x\cdot N_r\right\},
\end{align}
where the tuple $\left(x^i,r^i,u^i,P^i\right)$ is the $i$th training data point. It is obvious that $|\mathbb{D}_{training}|=N_x\cdot N_r$. {{\color{blue}}Note that one can use a larger $N_x$ and $N_r$ (i.e., divide the sets $\mathcal{X}$ and $\mathcal{R}$ into smaller cells) to make sure that enough training data is collected and the training dataset $\mathbb{D}_{training}$ covers all relevant aspects of the problem domain. Thus, the NN can be trained offline and there is no need for online adjustments to the network's parameters.}






Once the training dataset $\mathbb{D}_{training}$ is collected, we use them to train a (deep) feedforward NN \cite{Fine1999}. {{\color{blue}}Feedforward NNs are flexible and scalable, and can be used to approximate complex relationships between input and out parameters; also, their training is relatively straightforward compared to other structures \cite{Goodfellow2016}}. Figure\ref{fig:nn} presents the general structure of the considered NN, where $x^i$ and $r^i$ are the inputs of the NN and $u^i$ and $P^i$ are its outputs. {{\color{blue}}Note that although we do not claim to be optimal, our experiments suggest that a feedforward NN provides satisfactory performance.}  {{\color{blue}}We use the growing method (see, e.g., \cite{Evci2022,Wu2020,Bengio2005}) to determine the number of hidden layers and neurons; that is, we start with a small network and increase the complexity until the desired accuracy is achieved.} Note that we apply the dropout method \cite{srivastava2014dropout,lim2021study,sanjar2020weight,wu2015towards} to avoid overfitting. Also, to improve the training process, we use the cosine annealing strategy \cite{liu2022super,eshraghian2022navigating} to adjust the learning rate.  {{\color{blue}}To evaluate the network's convergence, we use the mean squared error (MSE) loss function; see more details about the role of loss function in, e.g., \cite{Goodfellow2016}.}


\begin{figure}[!h]
    \centering
    \includegraphics[width=6cm]{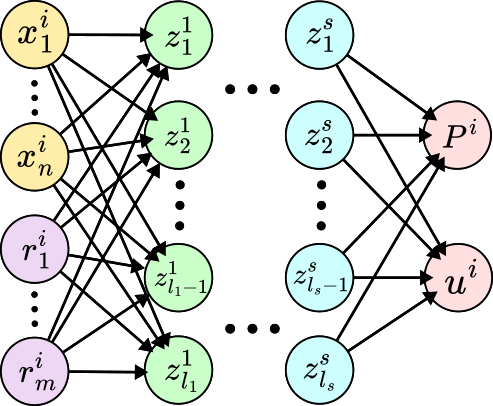}
    \caption{{{\color{blue}}The NN trained to approximate the solution of \eqref{eq:OptimizationProblemMain}, with $s$ hidden layers and $l_i$ neurons in the $i$th hidden layer.}}
    \label{fig:nn}
\end{figure}





\section{NN-Based Control}\label{sec:NNControl}
Let a NN be trained as discussed in Section \ref{sec:Training} to imitate the behavior of the one-step-ahead predictive control scheme described in Section \ref{sec:Solution}. At each time instant $t$, let $\hat{u}(t)=u^\ast(t)+\Delta u(t)$ and $\hat{P}(t)=P^\ast(t)+\Delta P(t)$ be the output of the NN, where $\Delta u(t)$ and $\Delta P(t)$ indicate the difference between the optimal solutions and the NN outputs. Note that $\hat{P}(t)$ is not necessarily positive definite. We reasonably assume that \cite{dai2021lyapunov} the NN functions satisfy the bounded condition for all states in the operating region $\mathcal{X}$; this implies that $\Delta u(t)$ and $\Delta P(t)$ are bounded at any time instant $t$. We let $\bar{\Delta u}=\sup_{t\geq0}\left\Vert\Delta u(t)\right\Vert$ and $\bar{\Delta P}=\sup_{t\geq0}\left\Vert\Delta P(t)\right\Vert$, where $\bar{\Delta u},\bar{\Delta P}\in\mathbb{R}_{\geq0}$.

\subsection{Theoretical Analysis}

{{\color{blue}}The following theorem shows that if the trained NN is utilized in the loop to control system \eqref{eq:system}, the system is stable and the tracking error remains bounded.} See Figure\ref{fig:paperchart} for the general structure of the proposed NN-based control scheme.

\begin{theorem}\label{theorem:NNControl}
{{\color{blue}}Consider system \eqref{eq:system}, and suppose that a NN trained to imitate the behavior of the control scheme \eqref{eq:OptimizationProblemMain} is utilized in the control loop. Let $r\in\mathcal{R}$ be the desired reference signal. Then, for sufficiently large $\theta$, the tracking error $\left\Vert x(t)-\bar{x}_r\right\Vert$ remains bounded, and there exists $\vartheta\in\mathbb{R}_{>0}$ such that $\left\Vert x(t)-\bar{x}_r\right\Vert\leq\vartheta$ as $t\rightarrow\infty$. }
\end{theorem}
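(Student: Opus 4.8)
The plan is to mimic the Lyapunov-decrease argument of Theorem~\ref{theorem:MainController}, but now tracking the perturbed quantities $\hat{u}(t)=u^\ast(t)+\Delta u(t)$ and $\hat{P}(t)=P^\ast(t)+\Delta P(t)$, and showing that the extra terms introduced by $\bar{\Delta u}$ and $\bar{\Delta P}$ only enlarge the ultimate bound $\sigma$ to some larger $\vartheta$ without destroying the negative-definiteness of $\Delta V$ outside a neighborhood. First I would define a candidate Lyapunov function along the closed-loop trajectory; the natural choice is $V(x(t),r,P^\ast(t))=\left\Vert x(t)-\bar{x}_r\right\Vert_{P^\ast(t)}$ evaluated at the \emph{true} optimal matrices (not $\hat{P}$, which need not be positive definite), since the optimal solution is still well-defined at the realized state $x(t)$ even though the controller actually applied $\hat{u}$. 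Then I would write $\Delta V(t)=\left\Vert x(t+1)-\bar{x}_r\right\Vert_{P^\ast(t+1)}-\left\Vert x(t)-\bar{x}_r\right\Vert_{P^\ast(t)}$ with $x(t+1)=f(x(t))+g(x(t))\hat{u}(t)=f(x(t))+g(x(t))u^\ast(t)+g(x(t))\Delta u(t)$.

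The key steps, in order: (i) split off the control-perturbation term via the triangle inequality in the $P^\ast(t+1)$-norm, picking up $\left\Vert g(x(t))\Delta u(t)\right\Vert_{P^\ast(t+1)}\leq\sqrt{\bar\lambda_P}\,\mu_g'\,\bar{\Delta u}$ where $\left\Vert g(x(t))\right\Vert$ is bounded over the compact $\mathcal{X}$ (this uses Assumption~\ref{assumption:Lipchitz2} together with continuity of $g$ on $\mathcal{X}$, or simply $\sup_{x\in\mathcal{X}}\left\Vert g(x)\right\Vert=:g_{\max}$); (ii) add and subtract $A_t x(t)$ exactly as in \eqref{eq:DV1}--\eqref{eq:DV2} to bring in the linearization error $\delta$ from Assumption~\ref{assumption:Lipchitz}; (iii) reproduce the step \eqref{eq:DV3}--\eqref{eq:DV4} that invokes constraint \eqref{eq:Constraint3} — but here I must be careful, because \eqref{eq:Constraint3} was enforced for $u^\ast(t)$, not $\hat{u}(t)$, so I would use the constraint on $u^\ast(t)$ and absorb the mismatch $\left\Vert A_tx(t)+g(x(t))\hat u(t)-\bar x_r\right\Vert - \left\Vert A_tx(t)+g(x(t))u^\ast(t)-\bar x_r\right\Vert\leq g_{\max}\bar{\Delta u}\sqrt{\bar\lambda_P}$ again via the triangle inequality; (iv) reproduce the optimality comparison \eqref{eq:Optimality1}--\eqref{eq:DV6}, which still holds because $P^\ast(t+1)$ is optimal \emph{given} $x(t+1)$ regardless of how $x(t+1)$ was reached, so $\left\Vert x(t+1)-\bar x_r\right\Vert_{P^\ast(t+1)}\leq\left\Vert x(t+1)-\bar x_r\right\Vert_{P^\ast(t)}$ is unchanged. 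Collecting all terms, I expect
\begin{align*}
\Delta V(t)\leq c_1\sqrt{\bar\lambda_P}\,\delta + c_2\,g_{\max}\,\sqrt{\bar\lambda_P}\,\bar{\Delta u} - \theta\left\Vert x(t)-\bar x_r\right\Vert
\end{align*}
for small explicit constants $c_1,c_2$, whence setting $\vartheta=\big(c_1\sqrt{\bar\lambda_P}\,\delta + c_2 g_{\max}\sqrt{\bar\lambda_P}\,\bar{\Delta u}\big)/\theta$ gives $\Delta V(t)<0$ whenever $\left\Vert x(t)-\bar x_r\right\Vert>\vartheta$, which yields the claimed ultimate boundedness. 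The role of ``sufficiently large $\theta$'' is only cosmetic here (to make $\vartheta$ as small as desired), though as Remark~2 notes it trades off against conditioning.

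The main obstacle I anticipate is the treatment of $\bar{\Delta P}$. Two distinct issues arise. First, $\hat P(t)$ may fail to be positive definite, so I cannot use it to define a Lyapunov function at all; my workaround above — using $P^\ast(t)$ along the realized trajectory — sidesteps this, but I should argue that $P^\ast(t)$ is still well-defined, i.e., that \eqref{eq:OptimizationProblemMain} remains feasible at every realized $x(t)\in\mathcal X$, which is exactly Theorem~\ref{theorem:feasibility} provided the closed loop keeps $x(t)\in\mathcal X$ (I would either assume this, consistent with the paper's treatment of $\mathcal X$ as defining a region of attraction, or note it follows once $\vartheta$ is small enough that the sublevel set stays in $\mathcal X$). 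Second, and more delicate: if the argument genuinely needs the \emph{applied} controller to satisfy something like \eqref{eq:Constraint3} with the \emph{NN's} matrix $\hat P$, then $\bar{\Delta P}$ would enter the bound multiplicatively with $\left\Vert x(t)-\bar x_r\right\Vert$ (since $P$ multiplies the state deviation), potentially overwhelming the $-\theta\left\Vert x(t)-\bar x_r\right\Vert$ term for states far from equilibrium. I believe this is avoided precisely by the device of evaluating $V$ at $P^\ast$ rather than $\hat P$, so that $\bar{\Delta P}$ never actually appears in the final estimate — but verifying that no hidden dependence on $\hat P$ survives (e.g. through how the NN-based controller is defined to use $\hat P$, if at all — the excerpt suggests $\hat P$ is merely an output and $\hat u$ is what is applied) is the crux of the proof and the place I would spend the most care.
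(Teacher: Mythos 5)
Your overall strategy is sound and would establish the stated ultimate-boundedness claim, but it departs from the paper's proof in one substantive way: you evaluate the Lyapunov function along the trajectory at the \emph{optimal} matrices $P^\ast(t)$, whereas the paper defines $\Delta V(t)=V\big(x(t+1),r,\hat{P}(t+1)\big)-V\big(x(t),r,\hat{P}(t)\big)$ with the NN's perturbed matrices $\hat{P}(t)=P^\ast(t)+\Delta P(t)$ and then peels off the perturbation via $\left\Vert z\right\Vert_{Q_1}-\left\Vert z\right\Vert_{Q_2}\leq\left\Vert z\right\Vert_{Q_1+Q_2}\leq\left\Vert z\right\Vert_{Q_1}+\left\Vert z\right\Vert_{Q_2}$. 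That choice is precisely why the paper's final estimate contains the state-dependent terms $\sqrt{\bar{\Delta P}}\left\Vert x(t)-\bar{x}_r\right\Vert$ and $(\sqrt{\bar{\lambda}_P}+\sqrt{\bar{\Delta P}})\mu_g\bar{\Delta u}\left\Vert x(t)-\bar{x}_r\right\Vert$ (the latter because $\left\Vert g(x(t))\right\Vert$ is bounded through Assumption~\ref{assumption:Lipchitz2} relative to $g(\bar{x}_r)$ rather than by a uniform supremum), and hence why ``sufficiently large $\theta$'' is a genuine hypothesis there: the denominator of \eqref{eq:vartheta} must be positive. Your device --- exploiting that only $\hat{u}$, never $\hat{P}$, enters the plant, and that $P^\ast(t+1)$ is optimal at the realized $x(t+1)$ however that state was reached --- is legitimate, and it buys a cleaner argument: the decrease condition holds for every $\theta>0$, and you never have to interpret a ``Lyapunov function'' built from a possibly indefinite $\hat{P}$ (which the paper handles only by placing an absolute value inside the square root). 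What you give up is the paper's explicit constant \eqref{eq:vartheta}, deliberately written in terms of $\bar{\Delta P}$ so that the ideal-NN limit $\bar{\Delta u}=\bar{\Delta P}=0$ visibly recovers $\sigma$. Two small caveats on your side: the constant $g_{\max}=\sup_{x\in\mathcal{X}}\left\Vert g(x)\right\Vert$ requires $\mathcal{X}$ to be bounded (the paper instead accepts the state-dependent $\mu_g\bar{\Delta u}$ term), and when you reuse the optimality comparison you must carry the extra $g(x(t))\Delta u(t)$ correction through the step corresponding to \eqref{eq:DV6}, since the realized $x(t+1)$ now differs from $A_tx(t)+g(x(t))u^\ast(t)+\big(f(x(t))-A_tx(t)\big)$ by exactly that amount; both points are routine to repair.
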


\begin{proof}
Let $\Delta V(t):=V\left(x(t+1),r,\hat{P}(t+1)\right)-V\left(x(t),r,\hat{P}(t)\right)$. We have:{{\color{blue}}
\begin{align}\label{eq:DVNN1}
\Delta V(t)&=\left\Vert x(t+1)-\bar{x}_r\right\Vert_{P^\ast(t+1)+\Delta P(t+1)}-\left\Vert x(t)-\bar{x}_r\right\Vert_{P^\ast(t)+\Delta P(t)}\nonumber\\
&=\left\Vert f\left(x(t)\right)+g\left(x(t)\right)\left(u^\ast(t)+\Delta u(t)\right)-\bar{x}_r\right\Vert_{P^\ast(t+1)+\Delta{P(t+1)}}-\left\Vert x(t)-\bar{x}_r\right\Vert_{P^\ast(t)+\Delta P(t)}.
\end{align}
}

According to triangle inequality (see footnote 3), Equation \eqref{eq:DVNN1} can be rewritten as:{{\color{blue}}
\begin{align}\label{eq:DVNN2}
\Delta V(t)\leq&\left\Vert f\left(x(t)\right)+g\left(x(t)\right)u^\ast(t)-\bar{x}_r\right\Vert_{P^\ast(t+1)+\Delta{P(t+1)}}+\left\Vert g\left(x(t)\right)\Delta{u(t)}\right\Vert_{P^\ast(t+1)+\Delta{P(t+1)}}\nonumber\\
&-\left\Vert x(t)-\bar{x}_r\right\Vert_{P^\ast(t)+\Delta P(t)},
\end{align}
}

which implies that\footnote{Given $z\in\mathbb{R}^n$ and $Q_1,Q_2\in\mathbb{R}^{n\times n}$, let $\left\Vert z\right\Vert_{Q_1}=\sqrt{\left\vert z^\top Q_1z\right\vert}$, $\left\Vert z\right\Vert_{Q_2}=\sqrt{\left\vert z^\top Q_2z\right\vert}$, and $\left\Vert z\right\Vert_{Q_1+Q_2}=\sqrt{\left\vert z^\top (Q_1+Q_2)z\right\vert}$. Thus, it can be easily shown that $\left\Vert z\right\Vert_{Q_1}-\left\Vert z\right\Vert_{Q_2}\leq\left\Vert z\right\Vert_{Q_1+Q_2}\leq\left\Vert z\right\Vert_{Q_1}+\left\Vert z\right\Vert_{Q_2}$. Also, $\left\Vert z\right\Vert_{Q_1}\leq\sqrt{\left\Vert Q_1\right\Vert}\left\Vert z\right\Vert$ and $\left\Vert z\right\Vert_{Q_2}\leq\sqrt{\left\Vert Q_2\right\Vert}\left\Vert z\right\Vert$.}:{{\color{blue}}
\begin{align}\label{eq:DVNN3}
\Delta V(t)\leq&\left\Vert f\left(x(t)\right)+g\left(x(t)\right)u^\ast(t)-\bar{x}_r\right\Vert_{P^\ast(t+1)}+\sqrt{\bar{\Delta{P}}}\left\Vert f\left(x(t)\right)+g\left(x(t)\right)u^\ast(t)-\bar{x}_r\right\Vert\nonumber\\
&+\left\Vert g\left(x(t)\right)\Delta{u(t)}\right\Vert_{P^\ast(t+1)+\Delta{P(t+1)}}-\left\Vert x(t)-\bar{x}_r\right\Vert_{P^\ast(t)}+\sqrt{\bar{\Delta{P}}}\left\Vert x(t)-\bar{x}_r\right\Vert.
\end{align}
}


According to Theorem \ref{theorem:MainController}, it follows from \eqref{eq:DVNN3} that:
\begin{align}\label{eq:DVNN4}
\Delta V(t)\leq&{3\sqrt{\bar{\lambda}_P}\delta}-\theta\left\Vert x(t)-\bar{x}_r\right\Vert+\sqrt{\bar{\Delta{P}}}\left\Vert x(t)-\bar{x}_r\right\Vert+\left\Vert g\left(x(t)\right)\Delta{u(t)}\right\Vert_{P^\ast(t+1)+\Delta{P(t+1)}}\nonumber\\
&+\sqrt{\bar{\Delta{P}}}\left\Vert f\left(x(t)\right)+g\left(x(t)\right)u^\ast(t)-\bar{x}_r\right\Vert.
\end{align}


At this stage, we upper bound the fourth and fifth terms in \eqref{eq:DVNN4} as follows:
\begin{itemize}
\item Fourth Term: This term can be upper bounded as (see footnote 6):
\begin{align}
\left\Vert g\left(x(t)\right)\Delta{u(t)}\right\Vert_{P^\ast(t+1)+\Delta{P(t+1)}}\leq&\left\Vert g\left(x(t)\right)\Delta{u(t)}\right\Vert_{P^\ast(t+1)}+\left\Vert g\left(x(t)\right)\Delta{u(t)}\right\Vert_{\Delta{P(t+1)}}\nonumber\\
\leq&\left(\sqrt{\bar{\lambda_p}}+\sqrt{\bar{\Delta{P}}}\right)\left\Vert g\left(x(t)\right)\Delta{u(t)}\right\Vert.\label{eq:UpperBound0}
\end{align}
    
By adding and subtracting $g\left(\bar{x}_r\right)\Delta{u(t)}$ in the norm of right-hand side, and using the Cauchy–Schwarz inequality and triangle inequality, it follows from \eqref{eq:UpperBound0} that:
\begin{align}
 \left\Vert g\left(x(t)\right)\Delta{u(t)}\right\Vert_{P^\ast(t+1)+\Delta{P(t+1)}}\leq&(\sqrt{\bar{\lambda_p}}+\sqrt{\bar{\Delta{P}}})\left\Vert g\left(x(t)\right)-g(\bar{x}_r)\right\Vert\left\Vert\Delta{u(t)}\right\Vert\nonumber\\
&+(\sqrt{\bar{\lambda_p}}+\sqrt{\bar{\Delta{P}}})\left\Vert g(\bar{x}_r)\right\Vert\left\Vert\Delta{u(t)}\right\Vert.\label{eq:UpperBound1}
\end{align}

Finally, according to Assumption \ref{assumption:Lipchitz2}, inequality \ref{eq:UpperBound1} can be expressed as:
\begin{align}
& \left\Vert g\left(x(t)\right)\Delta{u(t)}\right\Vert_{P^\ast(t+1)+\Delta{P(t+1)}}\leq(\sqrt{\bar{\lambda_p}}+\sqrt{\bar{\Delta{P}}})\mu_g\Delta\bar{u}\left\Vert x(t)-\bar{x}_r\right\Vert+(\sqrt{\bar{\lambda_p}}+\sqrt{\bar{\Delta{P}}})\left\Vert g(\bar{x}_r)\right\Vert\Delta\bar{u}.\label{eq:UpperBound2}
\end{align}

Note that for any given $r\in\mathcal{R}$, $\left\Vert g(\bar{x}_r)\right\Vert$ is bounded.

\item Fifth Term: By adding and subtracting $A_tx(t)$ and according to Assumption \ref{assumption:Lipchitz} and triangle inequality, we have:
\begin{align}
&\sqrt{\bar{\Delta{P}}}\left\Vert f\left(x(t)\right)+g\left(x(t)\right)u^\ast(t)-\bar{x}_r\right\Vert\leq\sqrt{\bar{\Delta{P}}}\delta+\sqrt{\bar{\Delta{P}}}\left\Vert A_tx(t)+g\left(x(t)\right)u^\ast(t)-\bar{x}_r\right\Vert.
\end{align}

At any time instant $t$, optimization problem \eqref{eq:OptimizationProblemMain} ensures that $\left\Vert A_tx(t)+g\left(x(t)\right)u^\ast(t)-\bar{x}_r\right\Vert_{P^{\ast}(t)}\leq\left\Vert x(t)-\bar{x}_r\right\Vert_{P^{\ast}(t)}$. Thus, according to footnote 4, we have:
\begin{align}               &\sqrt{\underline{\lambda}_P}\left\Vert A_tx(t)+g\left(x(t)\right)u^\ast(t)-\bar{x}_r\right\Vert\leq\sqrt{\bar{\lambda}_P}\left\Vert x(t)-\bar{x}_r\right\Vert,\label{eq:UpperBound3}
\end{align} 
where $\underline{\lambda}_P:=\inf_{t\geq0}\lambda_{\text{min}}\left(P^\ast(t)\right)$ ($\underline{\lambda}_P\in\mathbb{R}_{>0}$), with  $\lambda_{\text{min}}\left(P^\ast(t)\right)$ being the smallest eigenvalue of matrix $P^\ast(t)$, and $\bar{\lambda}_P$ is defined in the proof of Theorem \ref{theorem:MainController}. Thus, according to \eqref{eq:UpperBound3}, it follows from \eqref{eq:UpperBound2} that:
\begin{align}
&\sqrt{\bar{\Delta{P}}}\left\Vert f\left(x(t)\right)+g\left(x(t)\right)u^\ast(t)-\bar{x}_r\right\Vert\leq\sqrt{\bar{\Delta{P}}}\delta+\sqrt{\bar{\Delta{P}}}\frac{\sqrt{\bar{\lambda_P}}}{\sqrt{\underline{\lambda}_P}}\left\Vert x(t)-\bar{x}_r\right\Vert.\label{eq:UpperBound4}
\end{align}
\end{itemize}

Finally, combining \eqref{eq:DVNN4}, \eqref{eq:UpperBound2}, and \eqref{eq:UpperBound4} yields:
\begin{align}               
\Delta{V(t)}\leq&{3\sqrt{\bar{\lambda}_P}\delta}+\sqrt{\bar{\Delta{P}}}\delta+(\sqrt{\bar{\lambda_p}}+\sqrt{\bar{\Delta{P}}})\left\Vert g(\bar{x}_r)\right\Vert\Delta\bar{u}\nonumber\\
&-\Bigg(\theta-\sqrt{\bar{\Delta{P}}}-\frac{\sqrt{\bar{\Delta{P}}}\sqrt{\bar{\lambda_P}}}{\sqrt{\underline{\lambda}_P}}-(\sqrt{\bar{\lambda_p}}+\sqrt{\bar{\Delta{P}}})\mu_g\Delta\bar{u}\Bigg)\left\Vert x(t)-\bar{x}_r\right\Vert,
\end{align}
which completes the proof by selecting $\theta>\sqrt{\bar{\Delta{P}}}+\frac{\sqrt{\bar{\Delta{P}}}\sqrt{\bar{\lambda_P}}}{\sqrt{\underline{\lambda}_P}}+(\sqrt{\bar{\lambda_p}}+\sqrt{\bar{\Delta{P}}})\mu_g\Delta\bar{u}$, and setting $\vartheta$ as follows:
\begin{align}\label{eq:vartheta}
\vartheta=\frac{{3\sqrt{\bar{\lambda}_P}\delta}+\sqrt{\bar{\Delta{P}}}\delta+(\sqrt{\bar{\lambda_p}}+\sqrt{\bar{\Delta{P}}})\left\Vert g(\bar{x}_r)\right\Vert\Delta\bar{u}}{\theta-\sqrt{\bar{\Delta{P}}}-\frac{\sqrt{\bar{\Delta{P}}}\sqrt{\bar{\lambda_P}}}{\sqrt{\underline{\lambda}_P}}-(\sqrt{\bar{\lambda_p}}+\sqrt{\bar{\Delta{P}}})\mu_g\Delta\bar{u}}.
\end{align}
\end{proof}



\begin{remark}
{{\color{blue}}Since $\theta$ is a design parameter, the upper-bound of the tracking error when a NN is in the control loop (i.e., $\vartheta$ given in \eqref{eq:vartheta}) can be made arbitrarily small.}
\end{remark}

\begin{remark}
According to \eqref{eq:vartheta}, in the presence of an \textit{ideal} NN (i.e., $\bar{\Delta u}=\bar{\Delta P}=0$), the properties of the main controller (i.e., properties discussed in Theorem \ref{theorem:MainController}) are recovered, i.e., $\vartheta=\sigma$. 
\end{remark}







\subsection{Region of Attraction}\label{sec:RoA}
The proposed one-step-ahead predictive control given in \eqref{eq:OptimizationProblemMain} and the NN-based control described in Section \ref{sec:Training} are developed such that the stability of system \eqref{eq:system} is guaranteed in the operating region $\mathcal{X}$. However, they do not guarantee that the trajectory of the system remains in the operating region $\mathcal{X}$ at all times. Thus, it is important to determine the Region of Attraction (RoA) of the proposed methods.

For any steady-state admissible reference $r$, the RoA is defined as the set of all initial conditions belonging to the set $\mathcal{X}$ such that ensued trajectory remains inside the set $\mathcal{X}$. In mathematical terms, for a given $r\in\mathcal{R}$, the RoA $\Phi(r)\subseteq\mathcal{X}$ including $\bar{x}_r$ can be defined as:
\begin{align}\label{eq:RoA}
\Phi(r)=\left\{x\in\mathcal{X}|\hat{x}(k|x,r)\in\mathcal{X},~k\in\mathbb{Z}_{\geq0}\right\}, 
\end{align}
where $\hat{x}(k|x,r)$ is the predicted state from the initial state $x$ at the prediction instant $k$, when the desired reference is $r$, and the one-step-ahead predictive scheme \eqref{eq:OptimizationProblemMain} or the NN-based scheme described in Section \ref{sec:NNControl} are employed to control system \ref{eq:system}.

The most intuitive way to estimate the set $\Phi(r)$ is described below. First, divide the set $\mathcal{R}$ into $n_{\mathcal{R}}$ non-overlapping regions $\mathcal{R}_i,~i\in\{1,\cdots,n_{\mathcal{R}}\}$; this can be done by using existing techniques, e.g., Delaunay tessellation \cite{Watson1981,Lawson1986}. Then, for each region $\mathcal{R}_i$, start with a large set $\Phi(r)$ (usually the set $\mathcal{X}$) including the set of steady-state admissible equilibria associated with the region $\mathcal{R}_i$ (i.e., $\bar{\mathcal{R}}_i=\left\{x|\exists r\in\mathcal{R}_i\text{ such that }x=\bar{x}_r\right\}$), and compute simulated trajectories to find a $\tilde{x}\in\mathcal{X}$ and $\tilde{r}\in\mathcal{R}_i$ such that $\hat{x}(t|\tilde{x},\tilde{r})\not\in\mathcal{X}$ at some $t$; see Figure\ref{fig:RoA} for a geometric illustration. If such a point is found, the set $\Phi(r)$ is falsified and should be shrunk by excluding a neighbourhood around $\tilde{x}$. This falsification procedure should be computed until all simulated trajectories remain in $\mathcal{X}$ at all times. Note that although the above-mentioned procedure can be expensive from the computational viewpoint, it should be performed only at design-time and does not require human intervention.

\begin{figure}[!h]
    \centering
    \includegraphics[width=6cm]{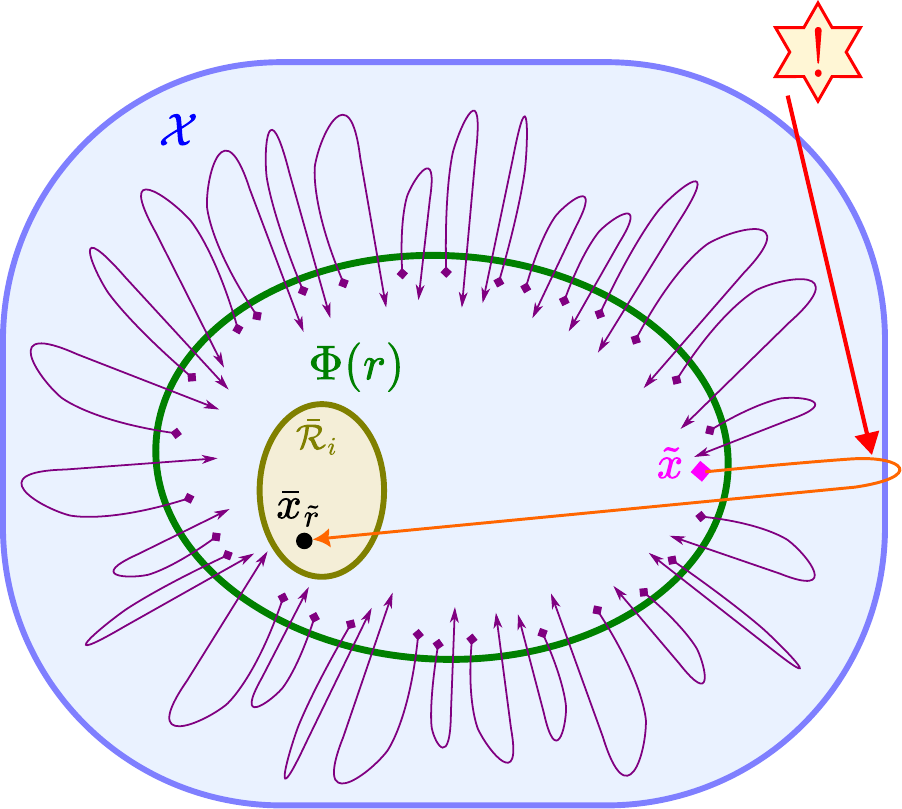}
    \caption{{{\color{blue}}Geometric illustration of the falsification procedure for determining the RoA $\Phi(r)$.}}
    \label{fig:RoA}
\end{figure}

\section{Simulation Results}\label{sec:EV}
This section assesses the effectiveness of the proposed methodology on an inverted pendulum system; see Figure\ref{fig:InvertedPendulum}. {{\color{blue}}The dynamics of the system can be described as \cite{boubaker2013IP}}:
\begin{align}\label{eq:systemInverted}
mL^2\ddot{\alpha} = mgL\sin{\alpha}+\tau,
\end{align}
where $\alpha$ represents angle of the inverted pendulum, $\tau$ is the torque generated from a motor rotating the pendulum, and $m=1$ [kg] and $L=1$ [m] are the pendulum mass and the distance from the center of mass, respectively. We assume that $\mathcal{X}=[-5,5]\times[-5,5]$ and $\mathcal{R}=[-1,1]$. The state-space representation for this system is:
\begin{subequations}
\begin{align}
\dot{x}_1&=x_2,\\
\dot{x}_2&=g\sin\left(x_1\right)+\tau,
\end{align}
\end{subequations}
where $x_1=\alpha$ and $x_2=\dot{\alpha}$. We use Euler's method to discretize system \eqref{eq:systemInverted} with the sampling period of $\Delta T=0.1$ seconds, i.e.,
\begin{subequations}\label{eq:DisPendulum}
\begin{align}
x_1(t+1)&=x_1(t)+\Delta T\cdot x_2(t),\\
x_2(t+1)&=x_2(t)+\Delta T\cdot g\cdot\sin\left(x_1(t)\right)+\Delta T\cdot\tau(t),
\end{align}
\end{subequations}

{{\color{blue}}We use the proposed NN-based scheme to control system \eqref{eq:DisPendulum}. First, we grid the above-mentioned sets $\mathcal{X}$ and $\mathcal{R}$ with steps of 0.1 to generate 200,000 data points. Then, we use YALMIP \cite{Lofberg2004} to solve the optimization problem \eqref{eq:OptimizationProblemMain} with $Q_x=2I_2$ and $Q_u=0.1$ for each data point. Once the training dataset $\mathbb{D}_{training}$ is generated, we use \texttt{Pytorch} package \cite{Pytorch} and \texttt{Adam} optimizer \cite{Adam} to train a feedforward NN with 6 hidden layers, and with 8, 32, 64, 64, 32, and 16 neurons in the hidden layers; note that the training converges within 10000 epochs given a learning rate of 0.001. Finally, we implement the resulting NN-based control scheme on \texttt{Python 3.10}.}


\subsection{Determining the RoA}
{{\color{blue}}We use the method described in Subsection \ref{sec:RoA} to determine the RoA for $r=0$ and with different values of $\theta$. More precisely, we divided the operating region $\mathcal{X}$ into $10^6$ grid cells, and used the developed NN-based control to compute simulated trajectory with the initial condition being equal to each cell; the initial condition belongs to the RoA if the trajectory remains entirely inside the operating region $\mathcal{X}$.} The obtained RoA with $\theta=0.01$ is shown in Figure\ref{fig:ROAPendulum}. Note that the obtained RoA $\Phi(0)$ with $\theta=0.001$ and $\theta=0.0001$ are almost the same as for $\theta=0.01$, and thus are not presented.

\begin{figure}[!t]
    \centering
    \includegraphics[width=2cm]{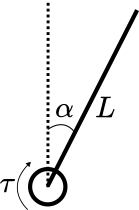}
    \caption{{{\color{blue}}Inverted pendulum system.}}
    \label{fig:InvertedPendulum}
\end{figure}

\begin{figure}[!t]
    \centering
    \includegraphics[width=7cm]{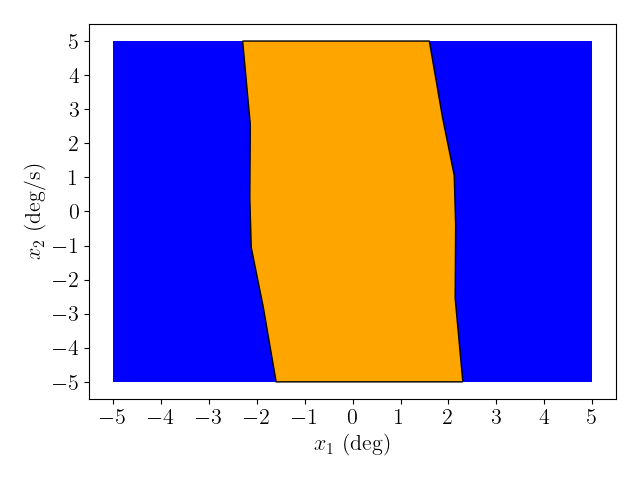}
    \caption{{{\color{blue}}The obtained RoA with $\theta=0.01$ and $r=0$; the blue and yellow represent $\mathcal{X}$ and $\Phi(0)$, respectively.}}
    \label{fig:ROAPendulum}
\end{figure}

\subsection{Performance Analysis}\label{sec:ExtensiveSimulation}
To evaluate the effectiveness of the proposed one-step-ahead predictive control scheme and the NN-based control scheme, we consider three cases based on the choice of the parameter $\theta$. To provide a quantitative comparison, we consider 1,000 experiments with $r=0$ and with the initial condition $x(0)=[x_1(0)~x_2(0)]^\top$, where in each experiment, $x_1(0)$ and $x_2(0)$ are uniformly selected from the interval [-1,1]{{\color{blue}}, which belongs to the RoA shown in Figure \ref{fig:ROAPendulum}}; note that the same initial condition is applied to all cases to ensure a fair comparison.

The obtained results are reported in Table \ref{tab:thetas}, where Performance Index (PI) is defined as $\text{PI}=\sum\left\Vert x(t)\right\Vert$. As seen in this table, using a large $\theta$ can improve the performance with both one-step-ahead predictive control scheme and the NN-based control scheme, even though the improvement with the NN-based control scheme is more significant than that with the one-step-ahead predictive control scheme.

Table \ref{tab:thetas} reveals that for $\theta=0.01$ and $\theta=0.001$, the NN-based control scheme provides a better tracking performance compared to the one-step-ahead predictive control scheme. Note that this is understandable, as the one-step-ahead predictive control scheme is tailored to minimize the cost function given in \eqref{eq:CostFunction}, which is different than the considered metric for comparing the tracking performance.

Figure\ref{fig:NNOPTx1x2} represents a typical time profile of the system states with NN-based control scheme and one-step-ahead predictive control scheme for $\theta=0.0001$. As seen in this figure, the tracking error with the NN-based control is greater than that of the one-step-ahead-predictive control.

{{\color{blue}}Note that NN-based control scheme is faster than the one-step-ahead control scheme, and thus is appropriate for real-time applications. For instance, when $\theta=0.01$, the mean computing time for the NN-based control scheme is 0.565 seconds, while the mean computing time for the one-step-ahead predictive controls scheme is 180.551 seconds; that is the NN-based control scheme is 319 times faster than the one-step-ahead predictive control scheme.}


\begin{table}[!t]
    \centering
    \caption{Performance Index for Different Values of $\theta$ and Different Control Methods.}
    \begin{tabular}{c|c|c}
    \hline
     Performance Index & One-Step-Ahead  & NN-Based \\  $[\text{deg}]$ & Predictive Control & Control \\
       \hline\hline
     ${\theta=0.0100}$ & 8.454 & 6.430 \\
       \hline   
     ${\theta=0.0010}$ & 8.495 & 7.047 \\
       \hline  
     ${\theta=0.0001}$ & 8.513 & 11.998\\
       \hline

       \end{tabular}
\label{tab:thetas}
\end{table}

\begin{figure}[!t]
    \centering
    \includegraphics[width=8cm]{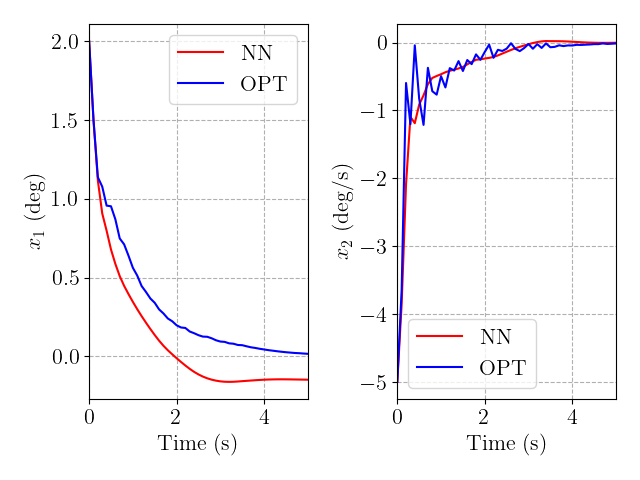}
    \caption{{{\color{blue}}Time profile of $x(t)$ with ${\theta}=0.0001$.}}
    \label{fig:NNOPTx1x2}
\end{figure}

\subsection{Time-Domain Analysis}
This section provides time-domain results of the proposed NN-based control scheme. We present results with $\theta=0.01$, $\theta=0.001$, and $\theta=0.0001$ for $r=0$. {{\color{blue}}For comparison purposes, we also consider the iterative LQR technique, where the dynamics of the inverted pendulum is linearized at every time instant, and then an infinite-horizon LQR control law is computed for the linearized system.}  The initial condition is $x(0)=[-2.3~5]^\top${{\color{blue}}; note that according to Figure\ref{fig:ROAPendulum}, it is obvious that $x(0)\in\Phi(0)$.}


Figure\ref{fig:ResultsStateInput} shows the time profile of the state variables $x_1(t)$ and $x_2(t)$, and the control input $u(t)$. As seen in this figure, the proposed NN-based control scheme can effectively steer the angle of the inverted pendulum to zero. It should be note that as expected, the larger the value of $\theta$ is, the better the tracking performance can become. In particular, the PI index defined above is 26.37, 26.99, and 31.17 with $\theta=0.01$, $\theta=0.001$, and $\theta=0.0001$, respectively. Similar to the extensive simulation studies reported in Subsection \ref{sec:ExtensiveSimulation}, $\theta=0.01$ and $\theta=0.001$ provide a comparable tracking performance, while $\theta=0.0001$ significantly degrades the tracking performance. 

{{\color{blue}}
Also, Figure\ref{fig:ResultsStateInput} reveals that the iterative LQR technique provides a better solution in comparison with the proposed NN-based control scheme; note that this observation is understandable, as the linearization error $\delta$ discussed in Assumption \ref{assumption:Lipchitz} is small throughout the RoA $\Phi(0)$.}

\begin{figure}[!t]
    \centering
    \includegraphics[width=7cm]{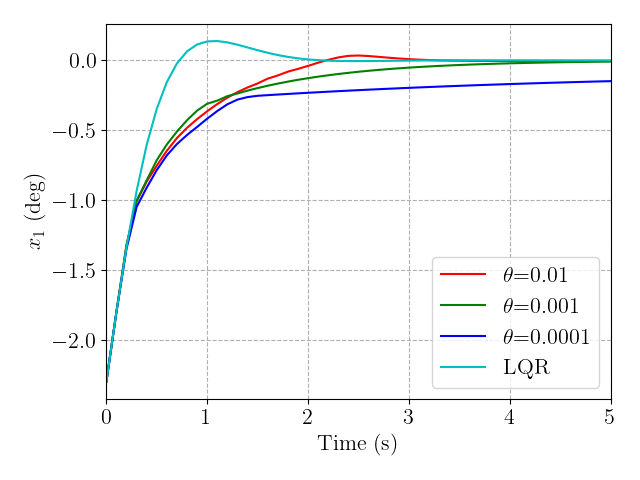}\\
    \includegraphics[width=7cm]{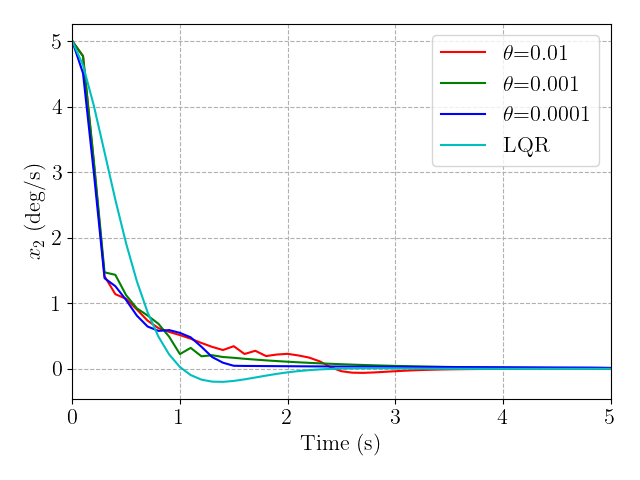}\\
    \includegraphics[width=7cm]{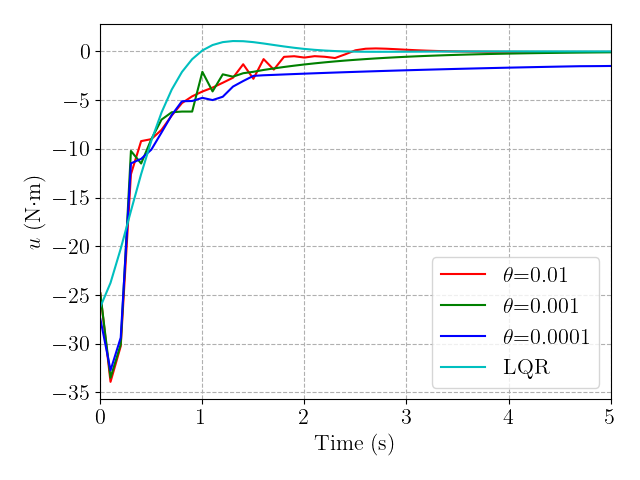}\\
    \caption{{{\color{blue}}Time profile of state vector $x(t)$ and control input $u(t)$.}}
    \label{fig:ResultsStateInput}
\end{figure}

Phase portrait of the system is shown in Figure\ref{fig:x1x2}, where the arrows show the moving direction of states. Note that the length of each arrow shows the speed of movement; that is, the higher the length of the arrow is, the faster the states move.

Figure \ref{fig:deltav} shows the time profile of $\Delta V(t)$ defined in \eqref{eq:DV1}. As seen in this figure, when the state vector $x(t)$ is far away from the equilibrium point (i.e., the origin), $\Delta V(t)$ is negative implying that the Lyapunov function is decreasing. As $x(t)$ gets closer to the equilibrium point, due to training errors, $\Delta V(t)$ may take positive values. Nevertheless, as mentioned in Theorem \ref{theorem:NNControl}, the system is stable and the tracking error remains bounded.




\begin{figure}[!t]
    \centering
    \includegraphics[width=7cm]{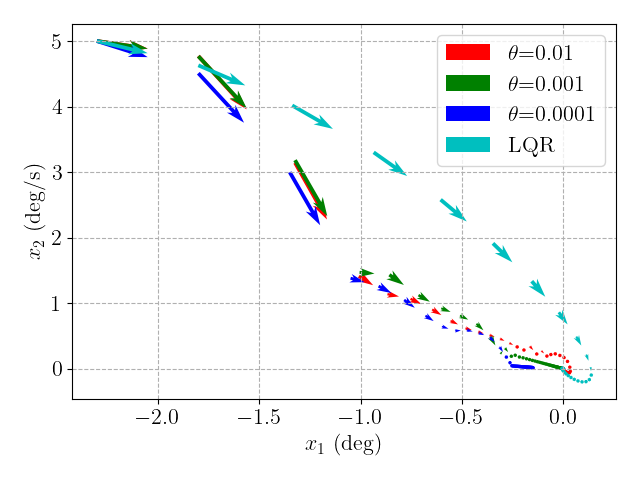}\\
    \caption{{{\color{blue}}Phase portrait graph.}}
    \label{fig:x1x2}
\end{figure}

\begin{figure}[!t]
    \centering
\includegraphics[width=7cm]{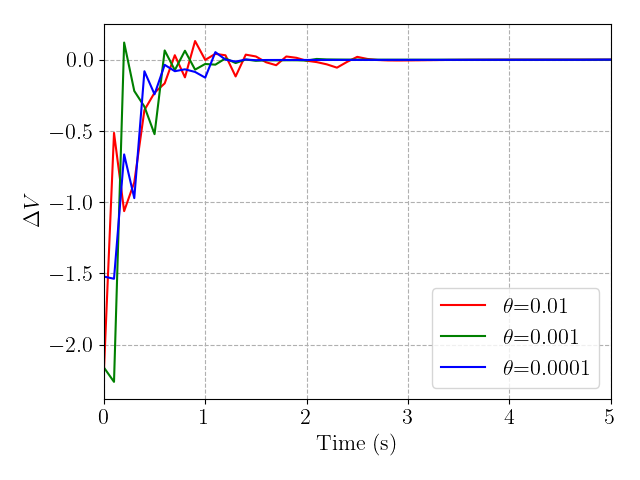}\\
    \caption{{{\color{blue}}Time profile of $\Delta V(t)$.}}
    \label{fig:deltav}
\end{figure}



{{\color{blue}}
\section{Experimental Results}
To validate the proposed scheme in in practical scenarios, we employ it to control the position of a Parrot Bebop 2 drone. The dynamical model of the drone can be expressed as \cite{AmiriMECC}:
$$
\begin{aligned}
& \dot{x}=\left[\begin{array}{cccccc}
0 & 1 & 0 & 0 & 0 & 0 \\
0 & -0.0527 & 0 & 0 & 0 & 0 \\
0 & 0 & 0 & 1 & 0 & 0 \\
0 & 0 & 0 & -0.0187 & 0 & 0 \\
0 & 0 & 0 & 0 & 0 & 1 \\
0 & 0 & 0 & 0 & 0 & -1.7873
\end{array}\right] x+\left[\begin{array}{ccc}
0 & 0 & 0 \\
-5.4779 & 0 & 0 \\
0 & 0 & 0 \\
0 & -7.0608 & 0 \\
0 & 0 & 0 \\
0 & 0 & -1.7382
\end{array}\right] u, \\
& y=\left[\begin{array}{llllll}
1 & 0 & 0 & 0 & 0 & 0 \\
0 & 0 & 1 & 0 & 0 & 0 \\
0 & 0 & 0 & 0 & 1 & 0
\end{array}\right] x,
\end{aligned}
$$



It should be noted that the dynamics along X, Y, and Z directions are decoupled; thus, we employed identical structured NN models in all three directions. More precisely, three feedforward NNs are utilized, each with 6 hidden
layers, and 8, 32, 64, 64, 32, and 16 neurons in the hidden layers. 

To collect the dataset, for the X and Y directions, we set $\mathcal{X}=[-0.5,0.5]\times[-1,1]$ and $\mathcal{R}=[0,0]$. For the Z direction, we set $\mathcal{X}=[1,2]\times[-1,1]$ and $\mathcal{R}=[1.5,0]$. The training dataset, denoted as $\mathbb{D}_{\text{training}}$, is generated by gridding the aforementioned regions with steps of 0.01; thus, $|\mathbb{D}_{\text{training}}|=40404$ for each direction.

We utilize the YALMIP toolbox to solve the optimization problem \eqref{eq:OptimizationProblemMain} for each data point, where $Q_x=20I_2$ and $Q_u=0.1$. We set $\theta=1$ for all scenarios. After collecting the data, we used the same training parameters and strategies as those used for the inverted pendulum system within 10000 epochs. Subsequently, the models are trained and converted into executable MATLAB format.

\begin{figure}[!t]
    \centering
    \includegraphics[width=9cm]{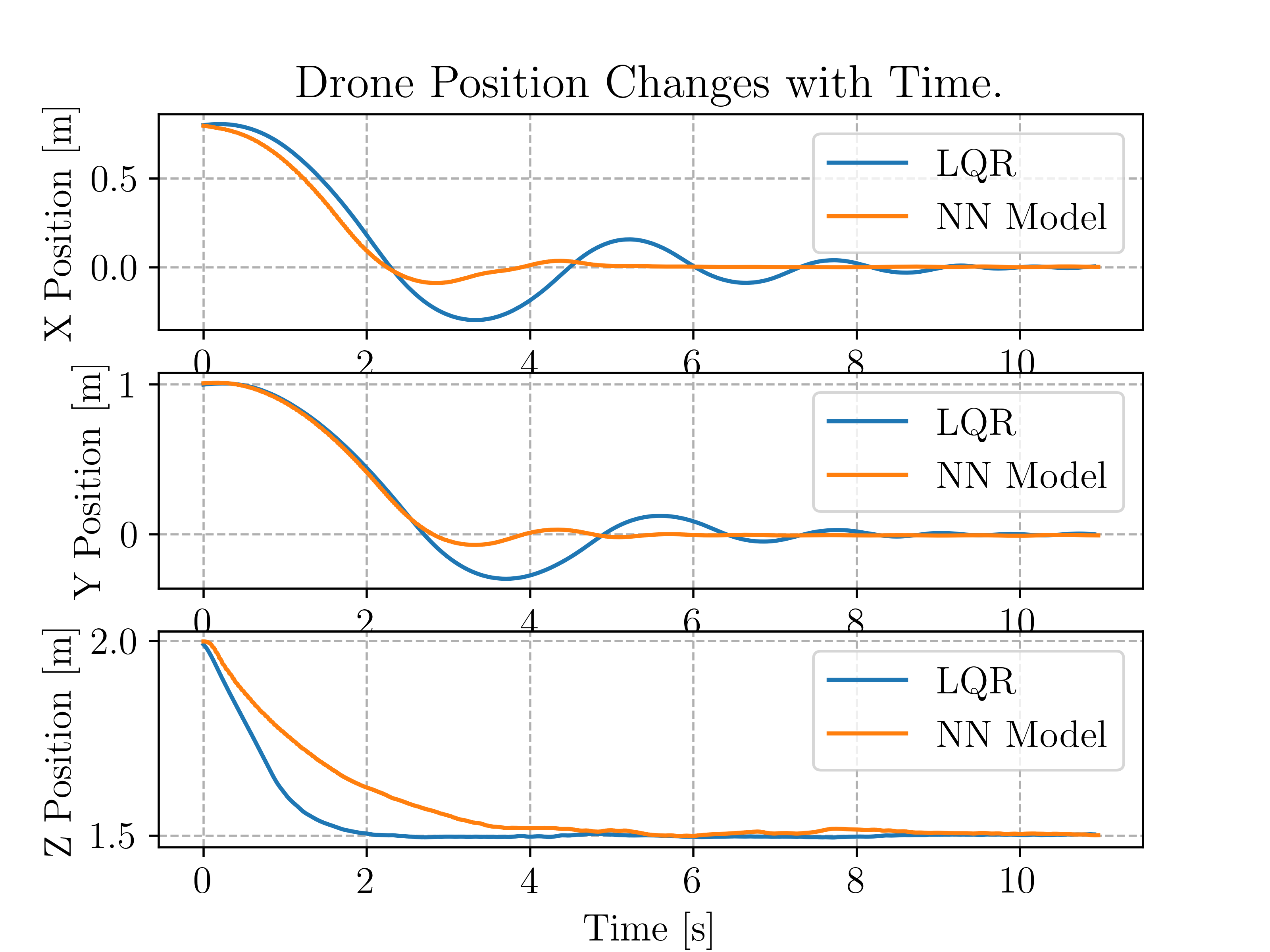}
    \caption{{{\color{blue}}Time profile of the drone's position.}}
    \label{fig:Drone_cpr}
\end{figure}

We compared the trained NN models with the LQR control method. To ensure a fair comparison, we set the weighting matrices of the LQR to be the same as the weighting matrices in the optimization problem \eqref{eq:OptimizationProblemMain}. 

Figure \ref{fig:Drone_cpr} presents the time profile of the drone's position. Also, control effort defined as $\sum\left\vert u(t)\right\vert$ for the proposed method and LQR is reported in Table \ref{tab:drone_effort}. As seen in Figure \ref{fig:Drone_cpr}, for the X and Y directions, our NN models outperformed the LQR method, as the linearization error in these directions is large; while the LQR method provided relatively better results in the Z direction. This observation is reasonable, as the optimization problem \eqref{eq:OptimizationProblemMain} is different that underlying optimization problem in LQR framework. 
}


\section{Conclusion}\label{sec:Con}
This paper proposed a systematic and comprehensive methodology to design provably-stable NN-based control schemes for affine nonlinear systems. First,  a novel one-step-ahead predictive control method was developed; its stability and convergence properties were analytically proven. Then, an approach was presented to train a NN that imitates the behavior of the one-step-ahead predictive control scheme in a given operating region. Stability and convergence properties of the closed-loop system with the trained NN in the loop were shown via rigorous analysis. In particular, it was shown that the resulting NN-based control scheme guarantees that the states of the system remain bounded, and asymptotically converge to a neighborhood around the desired equilibrium point, with a tunable proximity threshold. The effectiveness of the proposed approach was assessed via extensive simulation and experimental studies.

{{\color{blue}}The main limitation of the proposed method is that the optimization problem \eqref{eq:OptimizationProblemMain} is nonlinear and non-convex with respect to decision variables $u$ and $P$, and existing solvers may not able to solve its all instances for any given system. Thus, the NN trained on the dataset obtained by numerically solving the optimization problem \eqref{eq:OptimizationProblemMain} may give a large approximation error (i.e., large $\bar{\Delta u}$ and $\bar{\Delta P}$), and thus according to \eqref{eq:vartheta}, the resulting NN-based control scheme may yield poor performance. Future work will investigate methods to effectively solve the optimization problem \eqref{eq:OptimizationProblemMain} and collect a decent training dataset.}

\begin{table}[!t]
    \centering
    \caption{{{\color{blue}}Control effort (i.e., $\sum\left\vert u(t)\right\vert$) with NN models and LQR in three directions.}}
    \begin{tabular}{c|c|c|c}
    \hline
\text { Direction } & X & Y & Z \\
\hline
\text { Proposed Method } & 33.3 & 29.6 & 65.6 \\
\hline \text { LQR } & 61.7 & 59.8 & 95.9\\
\hline  
    \end{tabular}
    \label{tab:drone_effort}
\end{table}

\section*{Data Availability}
All the cade and data used in simulation and experimental studies are available at \url{https://github.com/anran-github/Provably-Stable-Neural-Network-Based-Control-of-Nonlinear-Systems.git}. 

\bibliography{references} 
\bibliographystyle{IEEEtran}

\end{document}